\documentclass{aip-cp}

\usepackage{amsmath}
\usepackage{amsthm}
\usepackage{tikz,pgf}
\usepackage{setspace, comment}
\usepackage{hyperref}
\usepackage[top=1in, bottom=1.25in, left=1.25in, right=1.25in]{geometry}
\usepackage{enumitem}
\usepackage{abstract}
\usepackage[nointegrals]{wasysym}
\usepackage[utf8]{inputenc}
\usepackage[T1]{fontenc}

\newtheorem{definition}{Definition}
\newtheorem*{theorem*}{Theorem}
\newtheorem{theorem}{Theorem}

\newtheorem{lemma}{Lemma}
\newtheorem{remark}{Remark}
\newtheorem{proposition}{Proposition}
\providecommand{\abstract}{}

\newcommand*\circled[1]{\tikz[baseline=(char.base)]{\node[shape=circle,draw,inner sep=2pt] (char) {#1};}}

\title{A Generalised Nehari Manifold Method for a Class of Non-Linear Schr\"odinger Systems in $\mathbb{R}^3$}

\begin{document}
\author[aff1]{\textbf{TOMMASO CORTOPASSI} \corref{cor1}}
\author[aff1,aff2,aff3]{\textbf{VLADIMIR GEORGIEV}}
\eaddress{georgiev@dm.unipi.it}

\affil[aff1]{Dipartimento di Matematica, Universit\'a di Pisa,
Largo B. Pontecorvo 5, 56100 Pisa, Italy.}
\affil[aff2]{Faculty of Science and Engineering, Waseda University, 3-4-1, Okubo, Shinjuku-ku, Tokyo 169-8555, Japan.}
\affil[aff3]{IMI-BAS, Acad. Georgi Bonchev Str., Block 8, 1113 Sofia, Bulgaria.}
\corresp[cor1]{tommaso.cortopassi@sns.it}
\maketitle
\begin{abstract}
   \noindent  We study the existence of positive solutions of a particular elliptic system in $\mathbb{R}^3$ composed of two coupled non linear stationary Schr\"odinger equations (NLSEs), that is $-\epsilon^2 \Delta u + V(x) u= h_v(u,v), - \epsilon^2 \Delta v + V(x) v=h_u (u,v)$. Under certain hypotheses on the potential $V$ and the non linearity $h$, we manage to prove that there exists a solution $(u_\epsilon,v_\epsilon)$ that decays exponentially with respect to local minima points of the potential and whose energy tends to concentrate around these points, as $\epsilon \to 0$. We also estimate this energy in terms of particular ground state energies. This work follows closely what is done in \cite{ramos2008solutions}, although here we consider a more general non linearity and we restrict ourselves to the case where the domain is $\mathbb{R}^3$.
\end{abstract}

\begin{section}{\textbf{INTRODUCTION}}

We will study the following system of two non linear stationary Schr\"odinger equations \footnote{Reprinted with permission from "AIP Conference Proceedings" \textbf{2459}, 030003 (2022), ; \href{https://doi.org/10.1063/5.0084041}{; https://doi.org/10.1063/5.0084041}}:

\begin{equation}\label{main system}
\begin{cases}
- \epsilon ^2 \Delta u + V(x) u = h_v (u,v)\\
- \epsilon^2 \Delta v + V(x) v = h_u (u,v)\\
u,v \in H^1 (\mathbb{R}^3)\\
u,v >0 \text{ in } \mathbb{R}^3
\end{cases}
\end{equation}
with $\epsilon >0$. We will denote with $h_u$ and $h_v$ the partial derivatives of $h$ with respect to the first and second variable, respectively. Relevant hypotheses on $h$ and $V$ are:

\begin{itemize}
    \item [h1)] $h(s,t),h_u(s,t),h_v(s,t),h_{uu}(s,t), h_{uv}(s,t),h_{vv}(s,t) \geq 0$ for every $s,t$, and they are 0 if either $s$ or $t$ is less or equal than 0;
    
    \item[h2)]
     There exists $\epsilon' >0$ such that 
     
     \begin{equation}
         \liminf_{s \to + \infty} \left( \inf_{ t \geq \epsilon'} \frac{h_u (s,t)}{s^{p+q-1}} \right)
    \geq C
     \end{equation}

    and 
    
    \begin{equation}
        \liminf_{t \to + \infty} \left( \inf_{ s \geq \epsilon'} \frac{h_v (s,t)}{t^{p+q-1}} \right)
    \geq C
    \end{equation}
     for some $p, q \in (2,3)$ and $C>0$;
     
    % Moreover, it holds that    $$\lim_{s \to +\infty} \left( \inf_{t >0} \frac{h_u (s,t)}{s^{p+q-1}} \right) \geq \delta_1 \text{ and } \lim_{t \to +\infty} \left( \inf_{s >0} \frac{h_v (s,t)}{t^{p+q-1}} \right) \geq \delta_2$$    for some $\delta_1, \delta_2 >0$;
    
    \item[h3)] It exists $\delta' >0$ such that $0 < (1+ \delta') \begin{bmatrix} \frac{h_u (s,t)}{s} &  0 \\ 0 & \frac{h_v (s,t)}  {t}
    \end{bmatrix}  \leq  \begin{bmatrix} h_{uu}(s,t) & h_{uv} (s,t) \\ h_{uv} (s,t) & h_{vv} (s,t) \end{bmatrix}$ for all $s,t >0$, in the sense of scalar products;

    \item[h4)] There exists a constant $C>0$ and $2< p,q < 3$ such that: \begin{align}
    &|h_{uu} (s,t)| + |h_{vv}(s,t)| + |h_{uv}(s,t)|\leq C(|s|^2 +|t|^2 + |s|^{p+q-2} + |t|^{p+q-2});\\
    &|h_u(s,t)| \leq C|s|(|s|^2 +|t|^2 + |s|^{p+q-2} + |t|^{p+q-2});\\
    &|h_v(s,t)| \leq C|t|(|s|^2 +|t|^2 + |s|^{p+q-2} + |t|^{p+q-2});
    \end{align}
    for every $s,t > 0$;

    \item[h5)]  
    There exists $\delta'' >0$  such that  
    
    \begin{equation}
        h_u(s,t)s + h_v(s,t)t - 2 h(s,t) \geq \delta'' ( h_u(s,t)s + h_v(s,t)t)
    \end{equation}
    
    for $s,t > 0$;
    
    \item[h6)]  For every $\mu >0$ there exists $C_\mu >0$ such that:
    \begin{align}
     &|h_u (s,t) t| +|h_v(s,t)s| \leq \mu (s^2 + t^2) + C_\mu (s^6 + t^6);\\
     &|h_u (s,t) t| +|h_v(s,t)s| \leq \mu (s^2 + t^2) + C_\mu (h_u (s,t)s + h_v (s,t)t).
    \end{align}
    
    Moreover in the first inequality we can also assume that $C_\mu \to 0$ as $\mu \to + \infty;$ 
    
    \item[V1)] The function $V$ is locally H\"older continuous, it holds
    
    \begin{equation}
        \alpha \coloneqq \inf_{\mathbb{R}^3} V >0
    \end{equation}
    
    and $V$ belongs to the reverse H\"older class $RH_\infty$ (see \cite{shen1995p} for the precise definitions, but for instance polynomials belong to $RH_\infty$);
    
    \item[V2)] There exist bounded domains $\Lambda_i$ mutually disjoint with $i= 1, \dots, k$ such that $\inf_{\Lambda_i} V < \inf_{\partial \Lambda_i} V$. That is, $V$ admits $k$ strict local minima points $x_1, \dots, x_k$.
\end{itemize}

\begin{remark}
Examples of functions $h$ and $V$ which satisfy the aforementioned hypotheses may be

\begin{equation}\label{definition of h}
h(s,t)= \begin{cases}
s^{p+q} + t^{p+q} + s^p t^q \text{ if }  s,t > 0 \\

0 \text{ otherwise}

\end{cases}
\end{equation}
and 

\begin{equation}
    V(x)= (1 + |x|)^2
\end{equation}
with $p, q \in (2,3)$. Although here $h$ is not even continuous along the axes, it can easily be seen that we can multiply it by appropriate cutoff functions which cut along the axes so that it becomes $C^\infty$.
\end{remark}

\noindent The space we will be working with is

\begin{equation}
    H\coloneqq \left\{ u \in H^1  (\mathbb{R}^3)  | \int V(x) u^2 < +\infty \right\},
\end{equation}

\noindent which is a Hilbert space endowed with the scalar product

\begin{equation}
    \langle u, v \rangle_H \coloneqq \int \{ \langle \nabla u, \nabla v \rangle + Vuv\}dx.
\end{equation} 
Thanks to the fact that $\inf_{x \in \mathbb{R}^3} V(x)= \alpha >0$, it is trivial to check that $H \subseteq H^1 (\mathbb{R}^3)$ continuously. The approach we will use to study this problem is that of a "generalised Nehari manifold", and it will follow closely what has been done by Ramos and Tavares in \cite{ramos2008solutions}, but the novelty in our work is that the non linearity in \eqref{main system} is more general. We can see that system \eqref{main system} admits a variational characterization as the Euler-Lagrange equation of the functional 

\begin{equation}
    I_\epsilon (u,v)= \int_{\mathbb{R}^3} \epsilon ^2 \langle \nabla u, \nabla v \rangle + V(x) uv - h(u,v) dx.
\end{equation}

\noindent This functional is indefinite even in its quadratic part and it can easily be seen that trying to use the classical Nehari method does not work. The key ideas of our approach are:

\begin{itemize}
    \item Decompose the space $H \times H$ as $H^+ \oplus H^-$, with 
    
    $$H^\pm \coloneqq \{(\phi, \pm \phi) \mid \phi \in H\}.$$
    
    If restricted to $H^-$, we can see that $I_\epsilon$ is definite negative.
    
    \item Modify the non linearity $h(s,t)$ with a particular $h(x,(s,t))$ depending on $x$ in order to gain more control on its behavior. We can see a posteriori and thanks to an argument which ultimately relies on the maximum principle that a solution to the modified system actually solves \eqref{main system}. 
\end{itemize}

The main result is the following:

\begin{theorem}\textbf{Main Theorem}\label{main theorem}

\noindent Under hypotheses V1), V2), h1)-h6) there exists $\epsilon_0 >0$ such that for every $0 < \epsilon < \epsilon_0$ the system (\ref{main system}) admits classical positive solutions $u_\epsilon, v_\epsilon \in C^2(\mathbb{R}^3) \cap H^2 (\mathbb{R}^3)$ with locally h\"olderian second derivatives, such that:
\begin{enumerate}[label=(\roman*)]
    \item there exist $x_{i, \epsilon} \in \Lambda_i$ for every $i=1, \dots, k$, local maximum points for both $u_\epsilon$ and $v_\epsilon$;
    \item $u_\epsilon (x_{i, \epsilon}), v_\epsilon (x_{i, \epsilon}) \geq b >0$ and $V(x_{i, \epsilon}) \to V(x_i)= \inf_{\Lambda_i} V $ as $\epsilon \to 0$;
    \item $u_\epsilon (x), v_\epsilon (x) \leq \gamma e^{-\frac{\beta}{\epsilon} |x- x_{i, \epsilon}|}, \; \forall x \in \mathbb{R}^3 \setminus \cup_{j \neq i} \Lambda_j$
    \end{enumerate}
    for every $i=1, \dots , k$ and for some positive constants $b,\gamma, \beta$. The uniqueness of local maximum points holds in the following sense:
    \begin{enumerate}[label=(\roman*)]
    \setcounter{enumi}{3}
    \item if either $u_\epsilon$ or $v_\epsilon$ has a local maximum point at some $z_\epsilon \neq x_{i, \epsilon}$ for every $i=1, \dots, k$, then it holds $\lim_{\epsilon \to 0} u_\epsilon (z_\epsilon)= \lim_{\epsilon \to 0} v_\epsilon (z_\epsilon)=0.$
    \end{enumerate}
    
    The solution $(u_\epsilon, v_\epsilon)$ has its energy concentrated near $x_1, \dots , x_k$. Consider the problems
    
    \begin{equation}\label{ground state system in main theorem}
    \begin{cases}
    - \Delta u + V(x_i) u= h_v (u,v)\\
    - \Delta v + V(x_i) v= h_u (u,v)\\
    u,v \in H^1 (\mathbb{R}^3)\\
    u,v >0
    \end{cases}
    \end{equation}
    
    for $i=1, \dots k$ and let $c_i$ be their ground state energy levels, that is
    
    \begin{equation}
        c_i \coloneqq \inf \{ I_{V(x_i)} (u,v) | u \neq 0, v \neq 0 \text{ and } (u,v) \text{ solves } (\ref{ground state system in main theorem}) \},
    \end{equation}
    
    with $I_{V(x_i)}(u,v)= \int_{\mathbb{R}^3} \langle \nabla u, \nabla v \rangle + V(x_i) uv- \int_{\mathbb{R}^3} h(u,v)$ the functional associated to (\ref{ground state system in main theorem}). If we define 
    
    \begin{equation}
        I_\epsilon (u,v)\coloneqq \int_{\mathbb{R}^3} \epsilon ^2 \langle \nabla u, \nabla v \rangle + V(x) uv - \int_{\mathbb{R}^3} h(u,v)
    \end{equation}
    
    and 
    
    \begin{equation}
        I_\epsilon ^i (u,v) \coloneqq \int_{\Lambda_i} \epsilon ^2 \langle \nabla u, \nabla v \rangle + V(x) uv - \int_{\Lambda_i} h(u,v)
    \end{equation}
    
    then it holds that
    
    \begin{equation}
        I_\epsilon ^i (u_\epsilon, v_\epsilon) = \epsilon ^3 (c_i + o_\epsilon (1))
    \end{equation}
    
    and 
    
    \begin{equation}
        I_\epsilon (u_\epsilon, v_\epsilon)= \epsilon ^3 \left( \sum_{i=1} ^k c_i + o_\epsilon (1) \right)
    \end{equation}
    with $o_\epsilon (1) \to 0$ as $\epsilon \to 0$.
    \end{theorem}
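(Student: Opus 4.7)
The plan is to follow the Ramos--Tavares strategy, adapting it to the more general nonlinearity $h$. The first step is to penalize the problem in the spirit of del Pino--Felmer: I replace $h$ by a modified nonlinearity $\tilde h_\epsilon(x,s,t)$ which agrees with $h$ on an enlarged neighborhood of $\bigcup_i \Lambda_i$ but is dampened (for instance, capped by a small multiple of the quadratic part) outside. This yields a modified functional $\tilde I_\epsilon$ on $H\times H$ enjoying better global compactness. The maximum principle, combined with the exponential decay proved at the end, will a posteriori ensure that any solution of the modified problem concentrated in $\bigcup_i \Lambda_i$ actually solves \eqref{main system}.

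Next I address the strongly indefinite quadratic part. Writing $(u,v)=(\phi+\psi,\phi-\psi)$ with $(\phi,\phi)\in H^+$ and $(\psi,-\psi)\in H^-$, the quadratic part of $I_\epsilon$ becomes $\|\phi\|_\epsilon^2-\|\psi\|_\epsilon^2$. Following Szulkin--Weth, for each $(\phi,\phi)\neq 0$ in $H^+$ I show, using h3) and h5), that $\tilde I_\epsilon$ restricted to the half-space $\mathbb{R}^+(\phi,\phi)+H^-$ admits a unique strict maximum. This produces a generalized Nehari manifold $\mathcal{M}_\epsilon$ homeomorphic to the unit sphere of $H^+$, on which $\tilde I_\epsilon$ is bounded below away from $0$. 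Here h3) is exactly the strict convexity condition needed for uniqueness of the fiber maximum, h5) gives the mountain-pass geometry, h4) guarantees subcritical growth and smoothness, while h6) provides the coercivity and Brezis--Lieb-type bounds required to extract a Cerami sequence at the minimax level $c_\epsilon := \inf_{\mathcal{M}_\epsilon} \tilde I_\epsilon$ and pass to the limit.

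The third step is the asymptotic analysis of the family of critical points. Using ground-state solutions $(U_i,V_i)$ of the limit systems \eqref{ground state system in main theorem}, translated to $x_i$, rescaled by $\epsilon$, and multiplied by cutoffs supported in $\Lambda_i$, I build a test configuration on $\mathcal{M}_\epsilon$ whose $\tilde I_\epsilon$ value is $\epsilon^3\bigl(\sum_i c_i+o_\epsilon(1)\bigr)$, yielding the upper bound. For the matching lower bound, I rescale around candidate concentration points $y=(x-x_{i,\epsilon})/\epsilon$ and perform a concentration-compactness analysis: hypothesis h2) controls the nontriviality of the weak limit by bounding $h_u,h_v$ from below at large arguments, and h3)--h5) force the limit profile to be a ground state at potential level $V(x_i)$. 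This simultaneously identifies $x_{i,\epsilon}$ as local maxima of both components, gives $V(x_{i,\epsilon})\to V(x_i)$ and $u_\epsilon(x_{i,\epsilon}),v_\epsilon(x_{i,\epsilon})\geq b>0$, proves the uniqueness statement (iv), and yields both energy-concentration formulae.

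Finally, for the exponential decay (iii) and the removal of the penalization, I combine V1)---in particular the $RH_\infty$ property, which through Shen's estimates yields uniform regularity---with Moser iteration applied to each component separately (relying on h1), which gives the correct sign on the positive cone) to obtain $L^\infty$ bounds independent of $\epsilon$. A barrier argument then gives the claimed decay $\gamma e^{-\beta|x-x_{i,\epsilon}|/\epsilon}$. This decay makes $u_\epsilon,v_\epsilon$ so small outside $\bigcup_i\Lambda_i$ that the modification of $h$ is inactive there, so $(u_\epsilon,v_\epsilon)$ solves the original system; Schauder estimates together with the local Hölder continuity of $V$ then give the $C^2$ regularity with Hölder second derivatives. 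The step I expect to be hardest is the lower energy bound: the generality of $h$, as opposed to a pure power, makes it delicate to pass to the limit in the rescaled problem and exclude escape of mass to infinity, and the combined use of h2), h5) and h6) will be the essential ingredient that lets this final argument go through.
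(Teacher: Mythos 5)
Your overall architecture (del Pino--Felmer penalization, the splitting $H^\pm=\{(\phi,\pm\phi)\}$, reduction on $H^-$, test functions built from rescaled ground states, a posteriori removal of the modification via the maximum principle) matches the paper's. But there is a genuine gap in how you build the Nehari manifold, and it breaks the multi-peak structure whenever $k\geq 2$. You take the Szulkin--Weth manifold $\mathcal{M}_\epsilon$: for each $(\phi,\phi)\in H^+$ a unique maximum on the half-space $\mathbb{R}^+(\phi,\phi)+H^-$, so $\mathcal{M}_\epsilon$ carries a \emph{single global} scaling constraint. The infimum of $\tilde I_\epsilon$ over such a manifold is attained, up to $o(\epsilon^3)$, by a configuration with one spike in the well of smallest limit energy, i.e. $\inf_{\mathcal{M}_\epsilon}\tilde I_\epsilon=\epsilon^3(\min_i c_i+o_\epsilon(1))$, not $\epsilon^3(\sum_i c_i+o_\epsilon(1))$. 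Your $k$-spike test function does lie (approximately) on $\mathcal{M}_\epsilon$ and gives the upper bound $\epsilon^3(\sum_i c_i+o_\epsilon(1))$, but the minimizer will not resemble it: nothing in your constraint set prevents the mass near all but one $\Lambda_i$ from collapsing to zero. Consequently property (i) (a local maximum of both components in \emph{every} $\Lambda_i$) and both energy formulas fail for $k\geq 2$; your "matching lower bound" cannot be established because it is simply false for the level $\inf_{\mathcal{M}_\epsilon}\tilde I_\epsilon$.

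The paper avoids this by localizing the scaling constraint per well: $N_\epsilon$ is defined by $J_\epsilon'(u,v)(\phi,-\phi)=0$ for all $\phi\in H$ together with $k$ separate conditions $J_\epsilon'(u,v)(\phi_i u,\phi_i v)=0$ and the non-degeneracy conditions $\int_{\Lambda_i}(u^2+v^2)>\epsilon^4$ for each $i$. The last condition is essential: without it the localized Nehari identities are trivially satisfied by pairs vanishing near $\Lambda_i$, and the infimum again drops to a single-spike level. With it, Proposition \ref{||(u,v)||_epsilon estimate proposition} upgrades $\int_{\Lambda_i}(u^2+v^2)>\epsilon^4$ to $\geq\eta_0\epsilon^3$, which is exactly what rules out vanishing in each well (via Lions' lemma, as used in Theorem \ref{limsup liminf theorem}) and forces a spike in every $\Lambda_i$. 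You would need to replace your single fiber map by this $k$-parameter localized version, prove non-emptiness by a Poincar\'e--Miranda argument in the $k$ scaling parameters as in Proposition \ref{prop3.1}, and then handle the $k$ Lagrange multipliers in the constrained Palais--Smale argument as in the proof of Theorem \ref{u epsilon v epsilon is critic point and ground state estimate}; the remainder of your plan then goes through along the paper's lines.
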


For more details, we refer the reader to \cite{NehariMethodThesis}.
%\begin{remark}
%As far as conditions V1) and V2) is concerned, we will always consider $\Lambda_i$ to be balls centered in $x_i$, where $x_i$ is a local strict minimum point for $V$, to avoid all sorts of complications derived from the non regularity of $\partial \Lambda_i$. The assumption of $V$ belonging to the reverse H\"older class $RH_\infty$ is purely technical, and it is used in order to gain sufficient regularity for the solutions $(u_\epsilon,v_\epsilon)$. We will use a regularity result based on Calderon-Zygmund decomposition, which was studied in \cite{shen1995p} and generalises a previous result in \cite{zhong1994harmonic} which tells us that $\nabla^2 (- \Delta + V(x))^{-1}$ is a Calderon-Zygmund operator if $V$ is a non negative polynomial.
%\end{remark}
\end{section}

\begin{section}{\textbf{TECHNICAL LEMMAS}}

Before proving Theorem \ref{main theorem}, we will have to prove some technical lemmas. The proofs will be sketchy and sometimes we will directly refer the reader to \cite{ramos2008solutions}.

\begin{lemma}\label{lemma1}

Let $\phi \in H$, $W \in C^0 (\mathbb{R}^2)$ such that $W(s,t)=0$ if $s \leq 0$ or $t \leq 0$, and 

\begin{equation}\label{W estimate}
|W(s,t)| \leq C(|s|^2 +|t|^2 +|s|^{p+q-2} + |t|^{p+q-2})
\end{equation}

\noindent for some positive constant $C$ and $p,q \in (2,3)$. 
Then, fixing $(u,v) \in H \times H$, the map from $H$ to $H'$ sending $\phi$ in $W(u,v) \phi$ is compact, where

\begin{align}
    W(u,v)\phi:& H \longrightarrow \mathbb{R} \nonumber\\
    & \psi \mapsto \int W(u,v) \phi \psi.
\end{align}
\end{lemma}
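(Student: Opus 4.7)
Since $T\colon H \to H'$ defined by $T(\phi)(\psi) = \int W(u,v)\phi\psi$ is linear, the plan is to verify compactness by showing that if $\phi_n \rightharpoonup \phi$ weakly in $H$, then $\|T(\phi_n) - T(\phi)\|_{H'} \to 0$, i.e.\ $\sup_{\|\psi\|_H \le 1} |\int W(u,v)(\phi_n-\phi)\psi| \to 0$.

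First I would pin down the integrability of $W(u,v)$. Using the continuous inclusions $H \hookrightarrow H^1(\mathbb{R}^3) \hookrightarrow L^r(\mathbb{R}^3)$ for every $r \in [2,6]$, together with the fact that $p+q-2 \in (2,4)$, each of $|u|^2$, $|v|^2$, $|u|^{p+q-2}$, $|v|^{p+q-2}$ lies in $L^{3/2}(\mathbb{R}^3)$ (for the last two since $3(p+q-2)/2 \in (3,6)$), so \eqref{W estimate} gives $W(u,v) \in L^{3/2}(\mathbb{R}^3)$. A single application of Hölder at exponents $(3/2, 6, 6)$ already shows that $T$ is well-defined and continuous.

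Next I would run a truncation plus Rellich argument. For $R>0$, split the integral over $B_R$ and its complement. On the exterior,
\begin{equation*}
\left|\int_{B_R^c} W(u,v)(\phi_n-\phi)\psi\right| \le \|W(u,v)\|_{L^{3/2}(B_R^c)} \|\phi_n-\phi\|_{L^6(\mathbb{R}^3)}\|\psi\|_{L^6(\mathbb{R}^3)},
\end{equation*}
and the first factor tends to $0$ as $R\to\infty$ by dominated convergence, uniformly in $n$ and in $\|\psi\|_H\le 1$ since $\phi_n$ is bounded in $H$. On the interior, I would split $W(u,v)$ into its four summands and apply Hölder with exponents chosen so that the power landing on $\phi_n-\phi$ is strictly less than $6$: the triple $(3,2,6)$ for the quadratic terms, and $\bigl(6/(p+q-2),\,6/(7-p-q),\,6\bigr)$ for the higher-order ones, where the middle exponent lies in $(2,6)$. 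Rellich--Kondrachov then gives $\phi_n \to \phi$ strongly in $L^r(B_R)$ for every $r<6$, so the interior piece tends to $0$ as $n\to\infty$, uniformly in $\|\psi\|_H\le 1$.

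A standard $\varepsilon$-and-then-diagonal argument closes the proof. The only genuinely delicate point is the Hölder bookkeeping: the bound $p+q-2<4$, hence the essential use of $p,q\in(2,3)$, is exactly what guarantees that the middle exponent $6/(7-p-q)$ is strictly less than $6$ so that Rellich compactness applies. The non-compactness of the embedding $H\hookrightarrow L^6(\mathbb{R}^3)$ is bypassed by the global $L^{3/2}$-integrability of $W(u,v)$, which supplies the required tail control outside large balls.
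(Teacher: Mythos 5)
Your proof is correct and follows essentially the same route as the paper: split the dual-norm estimate into a large ball, where Rellich--Kondrachov gives strong convergence of $\phi_n$ in the subcritical $L^r$ spaces dictated by the H\"older exponents, and its complement, where the smallness of the tail of $W(u,v)$ (equivalently, of $u,v$ in $L^r(B_R^c)$ for $r\in[2,6]$, as the paper phrases it) gives a uniform-in-$n$ remainder. The explicit exponent bookkeeping you supply, in particular $W(u,v)\in L^{3/2}(\mathbb{R}^3)$ and the triples $(3,2,6)$ and $\bigl(6/(p+q-2),\,6/(7-p-q),\,6\bigr)$, is exactly the detail the paper leaves implicit.
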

\begin{proof}

We can assume without loss of generality that $\{\phi_n\}_{n \in \mathbb{N}} \subset H$ with $||\phi_n||_H =1 \; \forall n \in \mathbb{N}$. By taking a subsequence, we assume that $\phi_n \rightharpoonup 0$. We have to prove that $W(u,v)\phi_n \to 0$ in $H'$. To prove this, fix $\epsilon>0$ and choose $B_R $ a ball centered in the origin with $R=R(\epsilon)$ such that

\begin{equation}\label{little r norm}
||u||_{L^r (B_R ^c)} , ||v||_{L^r (B_R ^c)} \leq \epsilon  \text{ for every } r \in [2,6].
\end{equation}

\noindent Then 

\begin{equation}
    \int W(u,v) \phi_n \psi= \overbrace{\int_{B_R} W(u,v) \phi_n \psi}^{\circled{1}}+ \overbrace{\int_{B_R ^c} W (u,v) \phi_n \psi}^{\circled{2}},
\end{equation}

\noindent and we can conclude that $||W(u,v) \phi_n||_{H'} \to 0$ by using Sobolev compact embedding in $\circled{1}$ and H\"older inequality along with \eqref{little r norm} in $\circled{2}$.
\end{proof}

\begin{lemma}\label{lemma2}
Consider the decomposition $H \times H= H^{+} \oplus H^{-}$, where 

\begin{equation}
    H^{\pm}= \{(\phi, \pm \phi)|\phi \in H\}.
\end{equation}

For every $(u,v) \in H \times H$ it is possible to define $(\Psi_{u,v}, - \Psi_{u,v}) \in H^-$ as the unique minimum point of

\begin{align}
    F: &H^- \longrightarrow \mathbb{R} \nonumber\\
    &(\phi,- \phi) \mapsto -I((u,v)+ (\phi, -\phi)).
\end{align}

Moreover, the map 

\begin{align}\Theta: &H \times H \to H \nonumber\\ 
&(u,v) \mapsto \Theta(u,v)= \Psi_{u,v}
\end{align}

is $C^1$.
\end{lemma}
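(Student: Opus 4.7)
The plan is to establish existence and uniqueness of the minimizer by the direct method, and then obtain the $C^1$ regularity of $\Theta$ through the implicit function theorem applied to the stationarity equation.

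Expanding $F$ along the diagonal direction one finds
\begin{equation*}
F(\phi,-\phi)=\|\phi\|_\epsilon^2 + L_{u,v}(\phi) + \int_{\mathbb{R}^3}\bigl[h(u+\phi,v-\phi)-h(u,v)\bigr]\,dx-I_\epsilon(u,v),
\end{equation*}
where $\|\phi\|_\epsilon^2:=\int\epsilon^2|\nabla\phi|^2+V\phi^2\,dx$ is equivalent to the $H$-norm and $L_{u,v}\in H'$ collects the linear cross terms arising from the quadratic part of $I_\epsilon$. The quadratic term is strictly convex and coercive. For the nonlinear term, the second directional derivative of $(s,t)\mapsto h(s,t)$ along $(1,-1)$ equals $h_{uu}-2h_{uv}+h_{vv}$, and plugging $\xi=(1,-1)$ into the matrix inequality h3 yields
\begin{equation*}
h_{uu}-2h_{uv}+h_{vv}\;\geq\;(1+\delta')\bigl(h_u/s+h_v/t\bigr)\;\geq\;0 \quad\text{whenever }s,t>0,
\end{equation*}
with $h\equiv 0$ elsewhere by h1; hence $\phi\mapsto\int h(u+\phi,v-\phi)$ is convex. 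Combined with weak lower semicontinuity (immediate for the norm, and following from convexity plus Fatou for the nonlinear integral), the direct method produces a unique minimizer $\Psi_{u,v}\in H$, so $\Theta$ is well-defined.

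For $C^1$ regularity I would apply the implicit function theorem to
\begin{equation*}
G:H\times H\times H\longrightarrow H',\qquad G(\phi;u,v):=F'(\phi;u,v),
\end{equation*}
which characterizes the minimizer by $G(\Psi_{u,v};u,v)=0$. The growth bounds h4 together with the Sobolev embeddings $H\hookrightarrow L^r(\mathbb{R}^3)$ for $r\in[2,6]$ show, via standard Nemytskii arguments, that $G$ is $C^1$. At the minimizer, the partial derivative $\partial_\phi G$ is represented by the bilinear form
\begin{equation*}
\langle\partial_\phi G(\Psi_{u,v};u,v)\eta,\psi\rangle=\int_{\mathbb{R}^3}\epsilon^2\nabla\eta\cdot\nabla\psi+V\eta\psi+W\,\eta\psi\,dx,
\end{equation*}
with $W:=(h_{uu}-2h_{uv}+h_{vv})(u+\Psi_{u,v},v-\Psi_{u,v})\geq 0$ by h3. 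This bilinear form is continuous (again by h4) and coercive, since the first two terms already dominate $\|\eta\|_\epsilon^2$. Lax--Milgram therefore identifies $\partial_\phi G(\Psi_{u,v};u,v)$ with a topological isomorphism $H\to H'$, and the implicit function theorem yields $\Theta\in C^1(H\times H;H)$.

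The main obstacle I expect is the verification that $G$ is genuinely $C^1$, which amounts to continuous dependence of the Hessian-type bilinear forms on $(\phi,u,v)$ in the operator norm of $\mathcal{L}(H,H')$. Here h4 provides, via H\"older's inequality and Sobolev embeddings, the domination needed to pass limits under the integral sign, while Lemma \ref{lemma1} supplies the correct compactness to handle the lack of a compact embedding at infinity. Once this regularity is in place the rest of the argument is routine.
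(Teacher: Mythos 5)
Your proposal is correct and reaches the same two milestones as the paper (well-posedness of the minimization, then the implicit function theorem applied to the Euler--Lagrange equation $G(\Psi_{u,v};u,v)=0$), but it diverges at the one step where the paper does real work: the invertibility of $\partial_\phi G$. The paper writes $\partial_\phi G((u,v),\Psi_{u,v})=-2\,\mathrm{Id}-H_h(P)$, uses Lemma \ref{lemma1} (with $W$ equal to the second derivatives of $h$, admissible by h4) to see that $H_h(P)$ is compact, and then invokes the Fredholm alternative, injectivity coming from the sign of $\int(\psi,-\psi)H_h(P)(\psi,-\psi)^T$. You instead observe that the bilinear form $2\langle\eta,\psi\rangle_\epsilon+\int W\eta\psi$ (you dropped the factor $2$ on the quadratic part, which is harmless) is continuous and coercive because $W=h_{uu}-2h_{uv}+h_{vv}\geq 0$, and conclude by Lax--Milgram. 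This is a genuinely more elementary route: it bypasses compactness entirely at this step, and in fact gives a uniform lower bound $\|\eta\|_\epsilon^2$ on the form rather than mere injectivity. Both arguments ultimately rest on the same reading of h3 as a quadratic-form inequality tested on the vector $(1,-1)$; your direct-method treatment of existence and uniqueness (strict convexity of $\|\phi\|_\epsilon^2$ plus convexity of $\phi\mapsto\int h(u+\phi,v-\phi)$ plus coercivity) is exactly the content behind the paper's unproved claim that $F$ ``admits a unique global minimum point,'' so nothing is lost there. Where Lemma \ref{lemma1} genuinely remains necessary in your version is where you put it: in checking that $(u,v,\phi)\mapsto G(\phi;u,v)$ is $C^1$ from $H\times H\times H$ into $H'$, since h4 alone gives the Nemytskii-type continuity only after the ball/complement splitting used in that lemma.

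One caveat worth recording: your deduction $h_{uu}-2h_{uv}+h_{vv}\geq(1+\delta')(h_u/s+h_v/t)$ takes h3 literally as an inequality of quadratic forms on all of $\mathbb{R}^2$. The paper's own proof of this lemma (and of Proposition \ref{negativity of J prime plus J second}) needs the same consequence, so you are not assuming more than the authors do; but if h3 were intended only for the test vector $(s,t)$ itself (as in the cited work of Ramos--Tavares), the convexity along the antidiagonal would require a separate justification, and the Fredholm route --- which only needs injectivity, not coercivity --- would be the more robust of the two.
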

\begin{proof}

It is easy to see that, for every fixed $(u,v) \in H \times H$, $F$ admits a unique global minimum point. To prove that $\Theta$ is of class $C^1$ we use the implicit function theorem. Consider:

\begin{align}
G: &(H \times H) \times H^- \longrightarrow (H^-)' \nonumber\\
&((u,v),(\phi,-\phi)) \mapsto I' ((u,v)+ (\phi, - \phi))=-F'(\phi).
\end{align}

\noindent Under our hypotheses, the map $G$ is $C^1$, because calculating $G'$ we get

\begin{align}
    G'((u,v), \phi)((f,g), \psi)(\rho,-\rho)&= - 2\langle \psi, \rho \rangle_H + \langle g-f, \rho \rangle_H - \nonumber \\
    & -\int (f+\psi, g-\psi)H_h (u + \phi, v-\phi) (\rho, -\rho)^T.
\end{align}

\noindent We have already verified that $G((u,v), \Psi_{u,v})=0$ and the derivative of $G$ with respect to the variable $\phi$ only, evaluated in $((u,v), \Psi_{u,v})$ is given by:

\begin{align}
    \frac{\partial G}{\partial \phi}((u,v), \Psi_{u,v}) (\psi, -\psi) (\rho, -\rho)= -2 \langle \psi, \rho \rangle_H  - \int (\psi, -\psi)H_h (P) (\rho, -\rho)^T.
\end{align}
That is, $\frac{\partial G}{\partial \phi}((u,v), \Psi_{u,v}) (\psi, -\psi)= [-2 Id - H_h (P)](\psi, -\psi) \in (H^-)'$, where

\begin{itemize}
    \item $\underbrace{-2(\psi,-\psi)}_{\in (H^-)'}(\rho,-\rho)= -2 \langle \psi , \rho \rangle_H$,
    
    \item $\underbrace{H_h (P)(\psi, -\psi)}_{\in (H^-)'}(\rho, -\rho)= \int (\psi, -\psi) H_h (P) (\rho, -\rho).$
\end{itemize}

\noindent To prove that $\frac{\partial G}{\partial \phi}((u,v), \Psi_{u,v})=[-2 Id - H_h (P)](\psi, -\psi) \in (H^-)'$ is an isomorphism we can use Fredholm alternative along with Lemma \ref{lemma1}.
\end{proof}

\begin{lemma}\label{lemma6}
Let $(u_n, v_n)$ be a Palais-Smale sequence for the functional $I$, that is a sequence such that

\begin{itemize}
    \item $0 < \liminf_{n \to \infty} I(u_n,v_n) \leq \limsup_{n \to \infty} I(u_n,v_n) < +\infty;$
    \item $\mu_n \coloneqq ||I'(u_n,v_n)||_{(H \times H)'} \to 0$ as $n \to +\infty.$
\end{itemize}

Then $\sup_{s \geq 0} \{I(s(u_n,v_n)+ (\phi, -\phi)) | \phi \in H \}= I(u_n, v_n) + O(\mu_n ^2).$
\end{lemma}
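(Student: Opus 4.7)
The plan is to view the supremum as the maximum of the smooth real-valued function $g_n(s,\phi) := I(s(u_n,v_n)+(\phi,-\phi))$ on $[0,+\infty)\times H$, to show that its maximiser $(s_n,\phi_n)$ satisfies $|s_n-1|+\|\phi_n\|_H = O(\mu_n)$, and then to Taylor-expand $g_n$ around $(1,0)$. Once this distance estimate is secured, the first-order term of the expansion is bounded by $\mu_n\cdot(|s_n-1|+\|\phi_n\|_H) = O(\mu_n^2)$ thanks to the Palais--Smale bound on $I'(u_n,v_n)$, while the quadratic remainder is $O(|s_n-1|^2+\|\phi_n\|_H^2) = O(\mu_n^2)$ because $I''$ stays bounded on bounded subsets of $H\times H$; combined, this yields the claim.

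First I would establish boundedness of $(u_n,v_n)$ in $H\times H$. The key ingredient is h5): combining $I(u_n,v_n) = O(1)$ with $I'(u_n,v_n)(u_n,v_n) = O(\mu_n\|(u_n,v_n)\|_{H\times H})$ gives a uniform bound on $\int (h_u(u_n,v_n) u_n + h_v(u_n,v_n) v_n)$, and h6) then controls the indefinite cross term $\int V u_n v_n$ and $\int \nabla u_n \cdot \nabla v_n$ in a standard way that produces the desired bound. Next, for each fixed $s\geq 0$, Lemma~\ref{lemma2} applied to $(s u_n, s v_n)$ produces a unique $H^-$--maximiser $\phi = \Psi_s$, and the reduced function $s\mapsto g_n(s,\Psi_s)$ tends to $-\infty$ as $s\to+\infty$ by the superlinear lower bound from h2); hence a maximiser $(s_n,\phi_n)$ with $s_n>0$ exists, the positivity $s_n>0$ following because $I$ is nonpositive on $H^-$ (by h1, the nonlinearity vanishes on $H^-$ and the quadratic part there equals $-\|\phi\|_H^2$), whereas $g_n(1,0) = I(u_n,v_n)$ is bounded below by a positive constant.

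Writing $w_n := s_n(u_n,v_n)+(\phi_n,-\phi_n)$, the critical point equations at $(s_n,\phi_n)$ read $I'(w_n)(u_n,v_n)=0$ and $I'(w_n)(\psi,-\psi)=0$ for every $\psi\in H$. Subtracting $I'(u_n,v_n)$ and invoking the fundamental theorem of calculus, these equations turn into a $2\times 2$ block linear system for $(s_n-1,\phi_n)\in\mathbb{R}\times H$ driven by $I'(u_n,v_n)$, whose operator is the integrated Hessian of $I$ on $\mathbb{R}(u_n,v_n)\oplus H^-$ along the segment from $(u_n,v_n)$ to $w_n$. Coercivity of this operator comes from h3) on the $H^-$ block and from h5) on the $s$--block, while the mixed coupling is handled via Cauchy--Schwarz together with the uniform bounds of step one; inverting then produces $|s_n-1|+\|\phi_n\|_H = O(\mu_n)$, which feeds into the Taylor expansion described above.

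The principal obstacle is the coercivity of the linearised operator on $\mathbb{R}(u_n,v_n)\oplus H^-$ with constants uniform in $n$. Two issues must be excluded: the $s$--direction becoming asymptotically parallel to the $H^-$--direction, and the degeneration of the coercivity constants coming from h3) and h5) along the sequence. Both are resolved by the strict positivity $\delta'>0$ in h3) and $\delta''>0$ in h5), which combined with the boundedness of step one furnish an $n$--independent lower bound for the relevant quadratic form, as in the analogous argument of \cite{ramos2008solutions}.
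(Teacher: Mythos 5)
Your argument is sound in outline and coincides with the strategy of the proof that the paper defers to: the paper's own proof of Lemma \ref{lemma6} consists solely of the citation to \cite{ramos2008solutions}, and what you describe --- boundedness of the Palais--Smale sequence via h5)--h6), reduction to the fiber maximiser $(s_n,\phi_n)$ over $\mathbb{R}^+(u_n,v_n)\oplus H^-$ using Lemma \ref{lemma2}, the estimate $|s_n-1|+\|\phi_n\|_H=O(\mu_n)$ from the linearised system, and a second-order Taylor expansion --- is exactly the fiber-map linearisation carried out there. The one step you assert rather than prove, namely the uniform negative-definiteness of the integrated Hessian on $\mathbb{R}(u_n,v_n)\oplus H^-$ (together with the a priori localisation of $(s_n,\phi_n)$ in a fixed bounded region before linearising), is precisely the part the paper also leaves to the reference, so your proposal matches the intended proof as closely as the paper permits one to check.
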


\begin{proof}

See \cite{ramos2008solutions}.
\end{proof}

\begin{remark}\label{remark Fatou ground state}
Under the assumptions of Lemma \ref{lemma6}, we also have that 

\begin{equation}
    I_\lambda (u_n, v_n) \geq c(\lambda) + o_n(1) \text{ as } n \to \infty.
\end{equation}

\noindent Indeed up to a subsequence, $u_n \rightharpoonup u$ and $v_n \rightharpoonup v$ in $H^1 (\mathbb{R}^3)$. Up to another subsequence, the convergence can be considered pointwise almost everywhere. It is enough to consider bigger and bigger balls to have, inside them, a subsequence converging strongly in $L^2$ and so almost everywhere (eventually taking yet another subsequence). A simple diagonal argument gives us pointwise convergence almost everywhere to $(u,v)$. We have that $I_\lambda ' (u,v)=0$, which can be proved by checking $C_0 ^\infty$ functions only (which is not restrictive because they are dense in $H^1 (\mathbb{R}^3)$) and using Rellich-Kondrachov theorem in their support.
Using Fatou lemma and condition h5) we get

\begin{align}\label{liminf  estimate of ground state}
&2I_\lambda (u,v)= 2 I_\lambda (u,v) - \overbrace{I_\lambda ' (u,v)(u,v)}^{=0}= \int \overbrace{\langle \nabla h(u,v), (u,v) \rangle - 2 h(u,v)}^{\geq 0} = \nonumber\\
& \int \liminf_{n \to \infty} (\langle \nabla h(u_n,v_n), (u_n,v_n) \rangle - 2 h(u_n,v_n)) \leq \liminf_{n \to \infty} (2I_\lambda (u_n, v_n) - I_\lambda ' (u_n, v_n)(u_n, v_n))= \nonumber\\ &\liminf_{n \to \infty} 2 I_\lambda (u_n, v_n). 
\end{align}

\end{remark}

\noindent Thanks to Lemma \ref{lemma6}, it is possible to apply the argument used in Lemma 3.1 of \cite{ramos2006concentration} to have that the map $\lambda \mapsto c(\lambda)$ is increasing from $\mathbb{R}^+$ to $\mathbb{R}^+$. That is, at bigger values of $\lambda$ in (\ref{ground state system}) correspond bigger ground state energy levels. We only state the next Lemma:

\begin{lemma}\textbf{(see Lemma 3.1 in \cite{ramos2006concentration})}\label{ground state monotonicity with lambda}

\noindent For any constant $\lambda >0$, consider the ground state critical level $c(\lambda)$ defined as $c(\lambda) \coloneqq \inf \{I_\lambda (u,v)|u \neq 0, v \neq 0 \text{ and } I_\lambda ' (u,v)=0 \}$ associated to the problem

\begin{equation}
\begin{cases}
-\Delta u + \lambda u= h_v (u,v)\\
- \Delta v + \lambda v =h_u (u,v)\\
u, v \in H^1 (\mathbb{R}^3).
\end{cases}
\end{equation}

\noindent The map $\lambda \mapsto c(\lambda)$ is continuous and increasing.
\end{lemma}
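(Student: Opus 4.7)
The plan is to adapt the argument of Lemma 3.1 in \cite{ramos2006concentration}, using Lemma \ref{lemma6} as the key technical input. The starting observation is that Lemma \ref{lemma6}, applied to a Palais-Smale sequence $(u_n,v_n)$ for $I_\lambda$ at level $c(\lambda)$, yields in the limit the minimax identity
\begin{equation*}
c(\lambda) \;=\; \sup_{s\ge 0,\;\phi\in H} I_\lambda\bigl(s(u_n,v_n)+(\phi,-\phi)\bigr) + o(1),
\end{equation*}
so that when a ground state $(u_\lambda,v_\lambda)$ is available (produced via Remark \ref{remark Fatou ground state}), one has the sharp formula $c(\lambda) = \max_{s\ge 0,\,\phi\in H} I_\lambda(s(u_\lambda,v_\lambda)+(\phi,-\phi))$.

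For strict monotonicity, fix $\lambda_1<\lambda_2$, let $(u_2,v_2)$ be a ground state at level $\lambda_2$ and let $(s^*,\phi^*)$ realise (by Lemma \ref{lemma2}) the maximum of $(s,\phi)\mapsto I_{\lambda_1}(s(u_2,v_2)+(\phi,-\phi))$. Setting $U^* = s^*u_2+\phi^*$ and $V^* = s^*v_2-\phi^*$, the minimax characterisation together with the identity
\begin{equation*}
I_{\lambda_1}(U^*,V^*) = I_{\lambda_2}(U^*,V^*) - (\lambda_2-\lambda_1)\!\int U^*V^*
\end{equation*}
gives $c(\lambda_1)\le c(\lambda_2)-(\lambda_2-\lambda_1)\!\int U^*V^*$. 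Strict monotonicity then reduces to showing $\int U^*V^*>0$, which I would extract from the Euler-Lagrange conditions $\partial_s I_{\lambda_1}(U^*,V^*)=0$ and $\partial_\phi I_{\lambda_1}(U^*,V^*)=0$ at the maximiser combined with h3) and h5): these relations recast the quadratic coupling as a positive multiple of $\int(h_u(U^*,V^*)U^*+h_v(U^*,V^*)V^*)$, which is strictly positive by h1) since $I_{\lambda_1}(U^*,V^*)\ge c(\lambda_1)>0$.

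For continuity, monotonicity already yields one-sided limits. Upper semicontinuity at $\lambda_0$ follows by fixing a ground state $(u_0,v_0)$ at $\lambda_0$ and observing that $c(\lambda)\le \max_{s,\phi}I_\lambda(s(u_0,v_0)+(\phi,-\phi))$; the right-hand side depends continuously on $\lambda$ (the maximiser is $C^1$ in $\lambda$ by Lemma \ref{lemma2}, and $I_\lambda$ is affine in $\lambda$ at fixed arguments), so $\limsup c(\lambda)\le c(\lambda_0)$. Lower semicontinuity uses compactness: pick $\lambda_n\to\lambda_0$ and ground states $(u_n,v_n)$; h5) combined with the local boundedness of $c$ (from monotonicity) gives uniform boundedness in $H\times H$, so one extracts a weak limit $(u_0,v_0)$ and applies Remark \ref{remark Fatou ground state} to conclude that $(u_0,v_0)$ is a critical point at $\lambda_0$ with $I_{\lambda_0}(u_0,v_0)\le\liminf c(\lambda_n)$; a Lions-type non-vanishing argument, using h2) to rule out concentration to zero, then yields $I_{\lambda_0}(u_0,v_0)\ge c(\lambda_0)$ and closes the loop.

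The main obstacle is the positivity $\int U^*V^*>0$ at the generalised Nehari maximiser. In the scalar NLSE the homogeneous scaling $u_\lambda(x)=\lambda^{1/(p-1)}u(\sqrt\lambda x)$ makes the monotonicity of $c(\lambda)$ a one-line calculation, but in our coupled, non-homogeneous setting no such scaling exists, and the required positivity must instead be squeezed out of the structural hypotheses h3)--h5) and the Euler-Lagrange equations at the generalised Nehari maximiser, exactly as in \cite{ramos2006concentration}.
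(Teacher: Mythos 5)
The paper does not actually prove this lemma: it only states it and refers to Lemma 3.1 of \cite{ramos2006concentration}, noting that Lemma \ref{lemma6} is the technical ingredient that makes that argument applicable here. Your overall architecture --- the minimax characterisation of $c(\lambda)$ via Lemma \ref{lemma6}, the comparison of $I_{\lambda_1}$ and $I_{\lambda_2}$ along the fibre of a $\lambda_2$-ground state, and a Lions-type compactness argument for continuity --- is exactly the route the paper gestures at, so the plan is the right one.

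There is, however, a genuine gap at the step you yourself flag as the main obstacle, and your proposed fix does not work as stated. Writing $(U^*,V^*)$ for the maximiser, the Euler--Lagrange condition $I_{\lambda_1}'(U^*,V^*)(U^*,V^*)=0$ gives
\begin{equation*}
\int \langle \nabla U^*, \nabla V^* \rangle + \lambda_1 \int U^* V^* \;=\; \frac{1}{2}\int \bigl( h_u(U^*,V^*)U^* + h_v(U^*,V^*)V^* \bigr) \;>\;0 ,
\end{equation*}
which is positivity of the full $\lambda_1$-bilinear coupling, not of $\int U^*V^*$ alone: the gradient term $\int\langle\nabla U^*,\nabla V^*\rangle$ has no a priori sign, since neither $U^*=s^*u_2+\phi^*$ nor $V^*=s^*v_2-\phi^*$ is known to be nonnegative for an arbitrary $\phi^*\in H$. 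The remaining constraint $I_{\lambda_1}'(U^*,V^*)(\phi,-\phi)=0$ only produces the single scalar equation $-\Delta(V^*-U^*)+\lambda_1(V^*-U^*)=h_u(U^*,V^*)-h_v(U^*,V^*)$, which again does not separate the $L^2$ coupling from the gradient coupling. Since the entire monotonicity claim reduces to $\int U^*V^*\ge 0$ (strictly positive for strict monotonicity), and since, as you correctly observe, no homogeneity-based scaling is available, this step requires an actual argument (for instance, showing the maximising pair can be taken nonnegative, or whatever device \cite{ramos2006concentration} employs) before the proof is complete. Two minor further points: Lemma \ref{lemma2} gives $C^1$ dependence of the $H^-$-minimiser on $(u,v)$, not on $\lambda$, so the continuity of $\lambda\mapsto\max_{s,\phi}I_\lambda$ needs a separate (if routine) justification; and the attainment of the supremum at some finite $s^*>0$, which you use without comment, should be derived from h2) and h5).
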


\begin{lemma}\label{regularity of solutions}

If $u,v$ are solutions of (\ref{main system}), then $u, v \in H^2(\mathbb{R}^3) \cap C^{2} (\mathbb{R}^3)$ with locally h\"olderian second derivatives, and $u,v$ are strictly positive.
\end{lemma}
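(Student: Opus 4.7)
The plan is to upgrade the regularity of $(u,v)$ from $H^1$ to $C^{2,\alpha}\cap H^2$ by a standard elliptic bootstrap applied to each of the two scalar equations, and then to derive strict positivity from the strong maximum principle and h1).

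Since $u,v\in H^1(\mathbb{R}^3)\hookrightarrow L^r(\mathbb{R}^3)$ for every $r\in[2,6]$, hypothesis h4) together with the fact that the exponent $p+q-1\in(3,5)$ is strictly subcritical in $\mathbb{R}^3$ gives $h_u(u,v),h_v(u,v)\in L^{r_0}_{loc}(\mathbb{R}^3)$ for some $r_0>1$. By V1), $V$ is locally H\"older and bounded below by $\alpha>0$, so on any bounded domain the equation $-\epsilon^2\Delta u+Vu=h_v(u,v)$ reads as a scalar linear elliptic equation for $u$ with right-hand side in $L^{r_0}$, to which interior $W^{2,r_0}$ regularity applies. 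A Brezis--Kato / Moser iteration, made possible by the strict subcriticality $p+q-1<5$, combines this elliptic estimate with the Sobolev embedding and, after finitely many iterations, yields $u,v\in L^\infty_{loc}(\mathbb{R}^3)$. Then h4) forces $h_v(u,v),h_u(u,v)$ to be locally H\"older, and Schauder's interior estimates, together with the local H\"older continuity of $V$, upgrade $u,v$ to $C^{2,\alpha}_{loc}(\mathbb{R}^3)$. Global $H^2$-integrability then follows from the $L^p$-theory of Shen \cite{shen1995p} for the Schr\"odinger operator $-\epsilon^2\Delta+V$ with $V\in RH_\infty$, applied with right-hand side $h_v(u,v)\in L^2(\mathbb{R}^3)$ (which indeed lies in $L^2$ once $u,v\in L^\infty\cap H^1$).

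For strict positivity, h1) yields $h_u(u,v),h_v(u,v)\geq 0$ whenever $u,v\geq 0$. Taking $u,v\geq 0$ (the variational construction in the sequel produces nonnegative critical points) and using the $C^{2,\alpha}_{loc}$ regularity just established, each of $u,v$ is a classical nonnegative supersolution of $-\epsilon^2\Delta w+Vw=0$ with $V/\epsilon^2\geq 0$ locally bounded. The strong maximum principle for second-order elliptic operators with nonnegative zeroth-order coefficient then forces either $w\equiv 0$ in $\mathbb{R}^3$ or $w>0$ pointwise; the nontriviality required in (\ref{main system}) excludes the trivial alternative and gives $u,v>0$ throughout $\mathbb{R}^3$.

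The hardest part is the bootstrap: if $p+q$ is close to $6$, then $r_0$ is close to $6/5$ and $W^{2,r_0}_{loc}$ does not embed into $L^\infty_{loc}$ in a single step. The strict inequality $p+q-1<5$ is exactly what makes the iterates of the map (elliptic regularity $+$ Sobolev embedding) raise the integrability at each step until an exponent above $3/2$ is reached, after which $L^\infty_{loc}$ follows; the subsequent Schauder and Shen steps are then routine.
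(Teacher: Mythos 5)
Your overall strategy---elliptic bootstrap from $H^1$, then Schauder for $C^{2,\alpha}$, then the maximum principle for strict positivity---is exactly the paper's. The one genuine gap is in the passage to \emph{global} $H^2(\mathbb{R}^3)$. You run the bootstrap locally (interior $W^{2,r_0}$ estimates, Brezis--Kato iteration), which only yields $u,v\in L^\infty_{\mathrm{loc}}$, and you then feed Shen's $L^p$ theory the claim $h_v(u,v)\in L^2(\mathbb{R}^3)$, justified by ``$u,v\in L^\infty\cap H^1$''. But global boundedness has not been established at that point: from $u,v\in H^1(\mathbb{R}^3)\hookrightarrow L^6$ alone, h4) only places $h_v(u,v)$ in $L^{6/(p+q-1)}(\mathbb{R}^3)$, and since $p+q>4$ one has $6/(p+q-1)<2$, so the right-hand side is \emph{not} known to lie in $L^2(\mathbb{R}^3)$, and local $L^\infty$ bounds do not close that distance. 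The paper avoids this by running the bootstrap globally from the start: Corollary 0.9 of \cite{shen1995p} is a global estimate for the Schr\"odinger operator with $V\in RH_\infty$, so $h_v(u,v)\in L^{6/(p+q-1)}(\mathbb{R}^3)$ gives $\nabla^2 u\in L^{6/(p+q-1)}(\mathbb{R}^3)$, hence $u\in W^{1,6/(p+q-3)}(\mathbb{R}^3)$, and each iteration improves the \emph{global} integrability until $H^2$ and (by Sobolev embedding) global $C^{0,\alpha}$ are reached. Your argument is repaired by simply replacing the interior estimates with these global ones; the strict subcriticality $p+q-1<5$ that you correctly identify as the engine of the iteration plays the same role in both versions.

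Two smaller points. After $L^\infty_{\mathrm{loc}}$ you assert that h4) makes $h_v(u,v)$ locally H\"older; this needs $u,v$ themselves to be H\"older, which follows from one more elliptic step (an $L^\infty$ right-hand side gives $W^{2,r}_{\mathrm{loc}}$ for every $r$, hence $C^{1,\alpha}_{\mathrm{loc}}$), so it is routine but should be said. The positivity argument is fine, and in fact you do not need to assume $u,v\ge 0$ a priori: by h1) the right-hand sides are nonnegative regardless of the sign of $(u,v)$, so testing the equation for $u$ against $u^-$ already forces $u\ge 0$, after which the strong maximum principle applies exactly as you describe.
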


\begin{proof}

We can suppose without loss of generality that $\epsilon =1$. We know that

\begin{equation}
    - \Delta u + V(x) u= h_v (u,v) \lesssim |v|(|u|^2 +|v|^2+ |v|^{p+q-2} + |u|^{p+q-2}).
\end{equation}

The right hand side belongs to $L^{6/ (p+q-1)}$, and using Corollary 0.9 of \cite{shen1995p} we know that $\nabla^2 u \in L^{6/ (p+q-1)}$. This means that $ u \in W^{1, 6/(p+q-3)}$, where $\frac{6}{p+q-3}= \left( \frac{6}{p+q-1}\right)^*$, that is the Sobolev critical exponent associated to $6/(p+q-1)$. It is clear that repeating our argument with bigger exponents we will eventually get to a point where $u \in H^2$ and $u \in C^{0, \alpha}$ thanks to Sobolev embedding. The same holds for $v$. The positivity of $(u,v)$ and the fact that $u(x),v(x) \to 0$ as $|x| \to + \infty$ can easily be derived the maximum principle. Since $h(u,v)$ is h\"olderian and $V$ is locally H\"older too, we can obtain $C^2$ regularity with locally h\"olderian second derivatives using Schauder theory (see \cite{ambrosio2019lectures}, \cite{evans1998partial} or \cite{gilbarg2015elliptic}).
\end{proof}
\end{section}

\begin{section}{\textbf{GENERALISED NEHARI MANIFOLD}}

From now on, for each $i=1, \dots, k$ we fix mutually disjoint open sets $\Lambda_i '$ and $\tilde{\Lambda}_i$ such that $\Lambda_i \Subset \Lambda_i ' \Subset \tilde{\Lambda}_i$ and cutoff functions

\begin{equation}
    \phi_i (x)= \begin{cases}
        1 \text{ if } x \in \Lambda_i\\
        0 \text{ if } x \in \mathbb{R}^3 \setminus \Lambda_i ' .
                \end{cases}
\end{equation}

\noindent We also denote 

\begin{equation}
    \Lambda \coloneqq \cup_i \Lambda_i; \; \Lambda' \coloneqq \cup_i \Lambda_i ' ; \; \tilde{\Lambda} \coloneqq \cup_i \tilde{\Lambda}_i.
\end{equation}

\noindent We fix a sufficiently small $a >0$ (we will see later how small we need it to be) and we define the following modification of $h$, using polar coordinates for the sake of simplicity:

\begin{equation}
    \tilde{h}(\rho, \theta)=
\begin{cases}
    h(\rho,\theta) \text{ if } \rho \leq a \\
    h(a, \theta)+ h_\rho (a, \theta) (\rho - a) +  \frac{h_{\rho \rho} (a, \theta)}{2} (\rho- a)^2 \text{ if } \rho > a.
\end{cases}
\end{equation}

\noindent This modification is still $C^2$ and it is a sort of quadratic prolongation of $h$ outside a ball $B_a$. Then we define

\begin{equation}
    h(x,(u,v)) \coloneqq \chi_\Lambda (x) h(u,v) + (1- \chi_\Lambda (x)) \tilde{h}(u,v)
\end{equation}
 
\noindent and the corresponding functional will be:

\begin{equation}
    J_\epsilon (u,v) \coloneqq \int \lbrace \epsilon^2 \langle \nabla u, \nabla v \rangle + V(x) uv \rbrace - \int h(x,(u,v))= \langle u,v \rangle_{\epsilon} - \int h(x,(u,v)).
\end{equation}
 
\noindent We are now considering the scalar product

\begin{equation}
    \langle u, v \rangle_\epsilon \coloneqq \int \epsilon ^2 \langle \nabla u, \nabla v \rangle + V(x) uv
\end{equation}

\noindent and we also define

\begin{equation}
    ||(u,v)||_\epsilon ^2 \coloneqq ||u||_\epsilon ^2 + ||v||_\epsilon ^2 = \int \epsilon^2 |\nabla u|^2 + V(x) u^2 + \int \epsilon ^2 |\nabla v|^2 + V(x) v^2.
\end{equation}

\noindent Properties h1)-h7) are unaffected by this modification, but we gain a control outside of $\Lambda$, that is it holds the property

\begin{itemize}
    \item[hm)]For some $\delta= \delta(a)>0$, we have 
    
    \begin{equation}
        |\nabla h(x,(s,t))| \leq \delta \sqrt{s^2 + t^2} \; \forall x \in \mathbb{R}^3 \setminus \Lambda \; \forall u,v >0;
    \end{equation}
    \end{itemize}

\noindent Critical points of $J_\epsilon$ do not satisfy (at least, not a priori) system (\ref{main system}), but they will now satisfy the modified system

\begin{equation}\label{modified system}
\begin{cases}
    - \epsilon^2 \Delta u + V(x)u= h_v (x,(u,v)) \\
- \epsilon^2 \Delta v + V(x)v= h_u (x,(u,v))\\
u, v \in H^{1}(\mathbb{R}^3).
\end{cases}
\end{equation}

\noindent This modification is of crucial importance in our method, indeed we change $h$ in such a way that it remains the same if $(u,v)$ is either near a minimum point for $V$ or small enough in norm, while we bound $h$ with a quadratic growth when $(u,v)$ is far from minima points of $V$ and with a norm greater than a fixed threshold $a$.
This will make us able to find a solution for (\ref{modified system}). However, a posteriori, we can say thanks to an argument which relies basically on the maximum principle that, outside of $\Lambda$, $||(u_\epsilon,v_\epsilon)||$ is never greater than $a$, at least for a sufficiently small $\epsilon$. This means that the modification never occurs and $(u_\epsilon, v_\epsilon)$ actually satisfies (\ref{main system}).\newline

\noindent We are now ready to define the generalised Nehari manifold.

\begin{definition}\textbf{Generalised Nehari manifold}

\noindent We define the generalised Nehari manifold $N_\epsilon$ as the set of functions $(u,v) \in H \times H$ such that for every $i \in \{ 1, \dots, k\}$:

\begin{enumerate}
\item $J_\epsilon ' (u,v)(\phi, -\phi)=0 \; \forall \phi \in H$;
\item $J_\epsilon ' (u,v)(u \phi_i, v \phi_i)=0 \; \forall i=1, \dots, k$ ; 
\item $\int_{\Lambda_i} (u^2 + v^2) > \epsilon ^4 \; \forall i=1, \dots, k.$
\end{enumerate}
with $\phi_1, \dots, \phi_k$ smooth cutoff functions near $x_1, \dots, x_k$.
\end{definition}

\noindent Before going on, we must check that $N_\epsilon \neq \emptyset$, which is not a trivial fact. we will make use of ground state solutions whose energy levels correspond to local minima for $V$.

\begin{definition}\label{ground state definition}\textbf{Ground state solutions and ground state energy level}

\noindent For a given $\lambda >0$ we will denote with $I_\lambda$ the energy functional associated with the problem

\begin{equation}\label{ground state system}
\begin{cases}
  - \Delta u + \lambda u = h_v(u,v)\\
  - \Delta v + \lambda v= h_u (u,v)\\
  u, v \in H^1 (\mathbb{R}^3)
\end{cases}
\end{equation}

\noindent and by $c(\lambda)$ the associated ground state energy level, that we define as  

\begin{equation}
    c(\lambda) \coloneqq \inf \{I_\lambda (u,v)| u \neq 0, v \neq 0 \text{ and } I_\lambda ' (u,v)=0 \}.
\end{equation}

\noindent If $(u,v)$ is a solution of (\ref{ground state system}) and $I_\lambda (u,v)= c(\lambda)$, then we say that $(u,v)$ is a ground state solution.
\end{definition}

\noindent We already know (see Lemma \ref{lemma2}) that for every $(u,v) \in H \times H$ there exists $\Psi_{u,v} \in H$ so that $J_\epsilon ' (u+ \Psi_{u,v},v-\Psi_{u,v})|_{H^-}=0$. Most of the proof of Proposition \ref{prop3.1} will indeed deal with the second condition in the definition of $N_\epsilon$, that is to find $(\bar{u}, \bar{v})$ that satisfy $J_\epsilon ' (\bar{u}, \bar{v})(\phi_i \bar{u}, \phi_i \bar{v})=0$ for $i=1, \dots, k$. \newline

\noindent We will explain the idea in the particular case of the potential $V(x)= (1+ |x|)^2$, that is a single point of minimum in the origin. The idea will be considering functions like:

\begin{equation}
    \bar{u}=t\phi(x)u \left(\frac{x}{\epsilon} \right)+ \bar{\Psi}_t; \qquad \bar{v}=t\phi(x) v\left( \frac{x}{\epsilon}  \right)- \bar{\Psi}_t
\end{equation}

\noindent where $t >0$ and $\bar{\Psi}_t$ is the minimum point associated to $(t\phi(x)u(x/\epsilon), t\phi(x)v (x/ \epsilon))$ as seen in Lemma \ref{lemma2}. With a rescaling like this we will see that when $\epsilon$ is small we can approximate $J_\epsilon '(\bar{u}, \bar{v})(\bar{u},\bar{v})$ with $\epsilon^3 I_{V(0)} '(\bar{u}, \bar{v})(\bar{u},\bar{v})$ with an error that is $o_\epsilon (\epsilon^3)$. With this notation we mean that a quantity $e_\epsilon $ that depends on $\epsilon$ belongs to $o_\epsilon (\epsilon^3)$ if it holds that

\begin{equation}
    \lim_{\epsilon \to 0} \frac{e_\epsilon}{\epsilon^3} =0.
\end{equation}

\noindent Finally we use already proved results on $I_{V(0)} '$ to conclude.

\noindent Fix points $x_i \in \Lambda_i$ such that $V(x_i)= \inf_{\Lambda_i} V$ and consider pairs of positive ground state solutions $u_i, v_i \in H^1 (\mathbb{R}^3)$ of the system

\begin{equation}
\begin{cases}\label{system with V(x_i)}
    -\Delta u_i + V(x_i) u_i = h_v (u_i, v_i)\\
    -\Delta v_i + V(x_i) v_i = h_u (u_i, v_i)
\end{cases}
\end{equation}

\noindent corresponding to the energy level $c_i\coloneqq c(V(x_i))$, and the functional

\begin{equation}
    I_\lambda (u,v) \coloneqq \int \langle \nabla u, \nabla v \rangle + \lambda uv - \int h(u,v).
\end{equation}

\noindent We recall that $c_i = I_{V(x_i)} (u_i, v_i)$ , and following the idea we have given earlier we define

\begin{equation}\label{definition of u_i, epsilon, v_i, epsilon}
u_{i, \epsilon} \coloneqq \phi_i (x) u_i \left(\frac{x-x_i}{\epsilon}\right), \; v_{i, \epsilon} \coloneqq \phi_i (x) v_i\left(\frac{x-x_i}{\epsilon}\right).
\end{equation}

\begin{proposition}\label{prop3.1}
There exists $\epsilon_0 >0$ such that for every $0 < \epsilon < \epsilon_0$ and every $i=1, \dots, k$ there is $\Psi_\epsilon \in H$ and points $t_{1,\epsilon}, \dots , t_{k, \epsilon} \in [1- \mu , 1+\mu]$ for some $\mu >0$ such that the functions:

\begin{equation}\label{form of u_epsilon, v_epsilon}
\bar{u}_\epsilon \coloneqq \sum_{i=1}^k t_{i, \epsilon} u_{i, \epsilon} + \Psi_\epsilon \text{ and } \bar{v}_\epsilon \coloneqq \sum_{i=1}^k t_{i, \epsilon} v_{i, \epsilon} - \Psi_\epsilon
\end{equation}
satisfy:
\begin{align}
    &J_\epsilon ' (\bar{u}_\epsilon, \bar{v}_\epsilon)(\phi_i \bar{u}_\epsilon, \phi_i \bar{v}_\epsilon)=0, \quad \forall i= 1, \dots, k\label{eq3.1.1};\\
    &J_\epsilon '(\bar{u}_\epsilon, \bar{v}_\epsilon)(\phi, -\phi)=0, \quad \forall i =1, \dots, k \; \forall \phi \in H\label{eq3.1.2};\\
    &J_\epsilon (\bar{u}_\epsilon, \bar{v}_\epsilon)= \epsilon ^3 \left( \sum_{i=1}^k c_i + o_\epsilon (1) \right) \text{ as } \epsilon \to 0, \label{eq:3.1.3}
\end{align}
with $u_{i, \epsilon}$ ,$v_{i, \epsilon}$ defined in (\ref{definition of u_i, epsilon, v_i, epsilon}). Moreover
\begin{equation}\label{u_epsilon and v_epsilon weight epsilon cube}
\int_{\Lambda_i} (\bar{u}_\epsilon ^2 +  \bar{v}_\epsilon ^2) \geq \eta \epsilon^3 \; \forall i=1, \dots, k
\end{equation}
for some $\eta >0$.
\end{proposition}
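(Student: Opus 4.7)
The plan is to construct $(\bar{u}_\epsilon, \bar{v}_\epsilon)$ in two stages: first satisfy the infinite-dimensional $H^-$-constraint \eqref{eq3.1.2} automatically via Lemma \ref{lemma2}, then pick the parameters $t_{i,\epsilon}$ by a finite-dimensional topological argument to enforce \eqref{eq3.1.1}. For the first stage, given $t = (t_1,\dots,t_k) \in [1-\mu, 1+\mu]^k$, set $U_t^\epsilon := \sum_{i=1}^k t_i u_{i,\epsilon}$ and $V_t^\epsilon := \sum_{i=1}^k t_i v_{i,\epsilon}$, and let $\Psi_t^\epsilon := \Theta(U_t^\epsilon, V_t^\epsilon) \in H$ be the unique correction produced by Lemma \ref{lemma2}. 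The pair $(U_t^\epsilon + \Psi_t^\epsilon,\ V_t^\epsilon - \Psi_t^\epsilon)$ then satisfies \eqref{eq3.1.2} by construction, and $t \mapsto \Psi_t^\epsilon$ is $C^1$.

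For the second stage, consider the finite-dimensional vector field
\[
F_i^\epsilon(t) := \epsilon^{-3}\, J_\epsilon'\bigl(U_t^\epsilon + \Psi_t^\epsilon,\ V_t^\epsilon - \Psi_t^\epsilon\bigr)\bigl(\phi_i(U_t^\epsilon + \Psi_t^\epsilon),\ \phi_i(V_t^\epsilon - \Psi_t^\epsilon)\bigr).
\]
The aim is to show that $F^\epsilon \to F^0$ uniformly on $[1-\mu, 1+\mu]^k$, where $F_i^0(t) := I_{V(x_i)}'(t_i u_i, t_i v_i)(t_i u_i, t_i v_i)$. The key ingredients are: the rescaling $y = (x - x_i)/\epsilon$ carries $J_\epsilon$ localized to $\Lambda_i'$ into $\epsilon^3 I_{V(x_i)}$ modulo lower-order terms (using $\phi_i \equiv 1$ on $\Lambda_i$ and continuity of $V$ at $x_i$); the supports of the $i$-th and $j$-th blocks are disjoint for $\epsilon$ small so cross terms vanish; and the correction $\Psi_\epsilon$ contributes only $o_\epsilon(\epsilon^3)$. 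Granted uniform convergence, one observes that, by Lemma \ref{ground state monotonicity with lambda} together with the strict convexity coming from h3), the scalar map $t_i \mapsto F_i^0(t)$ is strictly decreasing through its zero at $t_i = 1$. Hence $F^0$ points strictly inward on the boundary of a small cube $[1-\mu, 1+\mu]^k$, and so does $F^\epsilon$ for all $\epsilon$ small. A Miranda-type (equivalently, Brouwer-degree) argument then produces $t_\epsilon \in (1-\mu, 1+\mu)^k$ with $F^\epsilon(t_\epsilon) = 0$, establishing \eqref{eq3.1.1}; moreover $t_{i,\epsilon} \to 1$ as $\epsilon \to 0$.

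For the energy asymptotics \eqref{eq:3.1.3}, the same rescaling on each block gives
\[
J_\epsilon(\bar{u}_\epsilon, \bar{v}_\epsilon) = \epsilon^3 \sum_{i=1}^k I_{V(x_i)}(t_{i,\epsilon} u_i,\ t_{i,\epsilon} v_i) + o_\epsilon(\epsilon^3),
\]
and combining $t_{i,\epsilon} \to 1$ with $I_{V(x_i)}(u_i, v_i) = c_i$ yields the claim. The lower bound \eqref{u_epsilon and v_epsilon weight epsilon cube} follows directly from $\int_{\Lambda_i} u_{i,\epsilon}^2\, dx = \epsilon^3 \int_{(\Lambda_i - x_i)/\epsilon} u_i^2\, dy$, which is bounded below by $\eta \epsilon^3$ because $u_i > 0$ and the rescaled domain exhausts a neighborhood of the origin as $\epsilon \to 0$.

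The main technical obstacle is controlling the correction, for which one expects an estimate of the form $\|\Psi_\epsilon\|_\epsilon = o_\epsilon(\epsilon^{3/2})$. The intuition is that $(U_t^\epsilon, V_t^\epsilon)$ is already an $o_\epsilon(\epsilon^{3/2})$-approximate critical point in the $H^-$ direction: after rescaling, each $(u_i, v_i)$ solves the limiting equation exactly, and the cutoffs $\phi_i$ contribute only commutator terms with $-\epsilon^2\Delta$ which are supported in the transition region of $\phi_i$, where the ground states $u_i, v_i$ decay exponentially by standard subsolution arguments applied to \eqref{system with V(x_i)}. Plugging this into the quantitative implicit function theorem afforded by Lemma \ref{lemma2} (with uniform invertibility of $\partial G/\partial \phi$) delivers the desired bound on $\Psi_\epsilon$, which in turn controls every error term that appears in the two steps above.
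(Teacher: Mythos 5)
Your overall architecture matches the paper's: use Lemma \ref{lemma2} to absorb the $H^-$ constraint into a $C^1$ correction $\Psi^\epsilon_t$, rescale each block $y=(x-x_i)/\epsilon$ to trade $J_\epsilon$ for $\epsilon^3 I_{V(x_i)}$, and close with a Poincar\'e--Miranda argument in the finite-dimensional variable $\bar t$. However, there is a genuine gap in your identification of the limiting vector field. You claim $\|\Psi^\epsilon_t\|_\epsilon = o_\epsilon(\epsilon^{3/2})$ uniformly on the cube, arguing that $(U^\epsilon_t, V^\epsilon_t)$ is an approximate critical point in the $H^-$ direction because each $(u_i,v_i)$ solves the limit equation exactly. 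That reasoning is only valid at $\bar t=(1,\dots,1)$: for $t_i\neq 1$ the pair $(t_iu_i,t_iv_i)$ is \emph{not} a critical point of $I_{V(x_i)}$, its $H^-$-component of the gradient does not vanish, and the rescaled correction converges not to $0$ but to the nonzero minimizer $\Psi_{t_i}$ of $\phi\mapsto -I_{V(x_i)}(t_i(u_i,v_i)+(\phi,-\phi))$, with $\|\Psi_{t_i}\|\sim |t_i-1|$. Consequently $F^\epsilon$ does not converge to your $F^0_i(t)=I_{V(x_i)}'(t_iu_i,t_iv_i)(t_iu_i,t_iv_i)$ but to $t_i\,\theta_i(t_i)$ with $\theta_i(t_i)=I_{V(x_i)}'\bigl(t_i(u_i,v_i)+(\Psi_{t_i},-\Psi_{t_i})\bigr)(u_i,v_i)$, which is exactly the limit field the paper uses; the key step there is that $\Psi^\epsilon_{\bar t}$ and $\Psi_{t_i}$ satisfy almost the same equation on $\Lambda_i^\epsilon$.

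This is not a cosmetic discrepancy, because the Miranda condition is checked on the boundary $|t_i-1|=\mu$, where your retained term is of size $O(\mu)$ and the term you dropped, $I''_{V(x_i)}(\cdot)(\Psi_{t_i},-\Psi_{t_i})(u_i,v_i)$, is of the \emph{same} order $O(\mu)$ and of the opposite (favorable-to-positive) sign, since $-I''$ is coercive on $H^-$. Hypothesis h3) alone gives the strict decrease of the \emph{uncorrected} map $t_i\mapsto I'_{V(x_i)}(t_i(u_i,v_i))(t_iu_i,t_iv_i)$ through $t_i=1$, but the strict decrease of the \emph{corrected} $\theta_i$ requires the generalized-Nehari ``strict maximum along $t(u_i,v_i)+H^-$'' property, i.e.\ the limit analogue of Proposition \ref{negativity of J prime plus J second}; your citation of Lemma \ref{ground state monotonicity with lambda} at this point is not relevant. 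The remaining items are fine modulo this: once $t_{i,\epsilon}\to 1$ is secured, $\Psi_{t_{i,\epsilon}}\to 0$ by continuity, so your derivations of \eqref{eq:3.1.3} and \eqref{u_epsilon and v_epsilon weight epsilon cube} (the latter needing only that $\|\Psi_\epsilon\|^2_{L^2(\Lambda_i)}$ is a small fraction of $\eta\epsilon^3$) go through.
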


\begin{proof}

We refer the reader to \cite{ramos2008solutions} for a detailed proof, here I will just give the idea. For every given $\epsilon$ and $\bar{t}=(t_1, \dots, t_k) \in [0, 2] \times \dots \times [0,2]$, let:

\begin{equation} 
    u_{\epsilon, \bar{t}} \coloneqq \sum_i t_i u_{i, \epsilon}, \quad v_{\epsilon, \bar{t}} \coloneqq \sum_i t_i v_{i, \epsilon}
\end{equation}
and let $\Psi_{\epsilon, \bar{t}}$ be defined as in Lemma \ref{lemma2}, that is the (unique) minimum point of $\phi \mapsto -J_\epsilon ((u_{\epsilon, \bar{t}}, v_{\epsilon, \bar{t}})+(\phi, -\phi))$, so that

\begin{equation}
    J_\epsilon ' ((u_{\epsilon, \bar{t}}, v_{\epsilon, \bar{t}})+(\Psi_{\epsilon, \bar{t}},-\Psi_{\epsilon, \bar{t}}))(\phi, -\phi)=0 \quad \forall \phi \in H.
\end{equation}
In a similar way we will call $\Psi_{ t_i}$ the minimum point of 

\begin{equation}
    \phi \mapsto -I_{ V(x_i)}((t_i u_{i},t_i v_{i})+ (\phi, -\phi)).
\end{equation}

\noindent We choose $i \in \{1, \dots, k\}$ and we define the following rescaled functions:

\begin{align}
\begin{cases}
    &\Psi_{\bar{t}}^\epsilon (x) \coloneqq \Psi_{\epsilon, \bar{t}} (x_i + \epsilon x);\\
    &\Psi_{t_i}^\epsilon \coloneqq \Psi_{t_i}(x_i + \epsilon x);\\
    &\phi_i ^\epsilon (x) \coloneqq \phi_i (x_i + \epsilon x);\\
    &V_i ^\epsilon (x) \coloneqq V(x_i + \epsilon x);\\
    &h_u ^{\epsilon,i}(x, (s,t)) \coloneqq h_u(x_i + \epsilon x, (s,t));\\
    &h_v ^{\epsilon,i}(x, (s,t))\coloneqq  h_v(x_i + \epsilon x, (s,t)).
    \end{cases}
\end{align}
We will also consider rescaled neighborhoods of $x_1, \dots, x_k$, defined as

\begin{equation}
    \Lambda_i ^\epsilon \coloneqq \frac{\Lambda_i - x_i }{\epsilon} .
\end{equation}
As we have already mentioned earlier, the idea of the proof will be that of substituting $J_\epsilon '$ with $I_{V(x_i)} '$. For this reason, we define the following functions:

\begin{align}
&\theta_{i, \epsilon}(\bar{t}) \coloneqq J_\epsilon ' (( u_{\epsilon, \bar{t}},  v_{\epsilon, \bar{t}}) + ( \Psi_{\epsilon, \bar{t}} ,-\Psi_{\epsilon, \bar{t}}))(u_{\epsilon, \bar{t}}\phi_i, v_{\epsilon, \bar{t}}\phi_i);\\
&\theta_i (t_i) \coloneqq I' _{V(x_i)} (t_i (u_i, v_i) + (\Psi_{t_i},-\Psi_{t_i}))(u_i, v_i).
\end{align} 
We will substitute $\theta_{i, \epsilon }(\bar{t})$ with $\theta_i (t_i)$, or to be more precise we will show that 

\begin{equation}\label{theta_i epsilon substitution theta_i}
\epsilon^{-3} \theta_{i, \epsilon}(\bar{t})=t_i   \theta_i (t_i)+ o_\epsilon (1).
\end{equation}
we will do this by localizing our functionals in $\Lambda_i ^\epsilon$ only, proving that the part outside of $\Lambda_i ^\epsilon$ is $o_\epsilon (1)$, and showing that as $\epsilon \to 0$:

\begin{align}
    \epsilon^{-3} \theta_{i,\epsilon} (\bar{t})=& I'(t_i(u_i,v_i)+ (\Psi_{\bar{t}}^\epsilon, -\Psi_{\bar{t}}^\epsilon))(t_i u_i , t_i v_i))|_{\Lambda_i ^\epsilon} +o_\epsilon (1) \xrightarrow{ \Psi_{\bar{t}}^\epsilon \to \Psi_{t_i}}\nonumber\\
    &  I_{V(x_i)}'(t_i(u_i,v_i) + (\Psi_{t_i}, -\Psi_{t_i}))(t_i u_i, t_i v_i)|_{\Lambda_i ^\epsilon} + o_\epsilon (1)= t_i  \theta_i (t_i) + o_\epsilon(1).
\end{align} 
After proving that both $\theta_i (t_i)$ and $\theta_{i, \epsilon } (\bar{t})$ are $o_\epsilon (\epsilon ^3)$ outside of $\Lambda_i ^\epsilon$, the key observation is that $\Psi_{t_i}$ and $\Psi_{\bar{t}} ^\epsilon$ satisfy almost the same equation in $\Lambda_i ^\epsilon$ (see \cite{ramos2008solutions}). We can conclude with a theorem commonly known as Poincaré-Miranda fixed point theorem (see \cite{miranda1940osservazione}).
\end{proof}

\begin{proposition}\label{||(u,v)||_epsilon estimate proposition}

For every $C_0 >0$ there exist $\epsilon_0 , D_0 , \eta_0 >0$ such that for every $0 < \epsilon < \epsilon_0$:

\begin{equation}
    (u,v) \in N_\epsilon , \; J_\epsilon (u,v) \leq C_0 \epsilon^3 \implies ||(u,v)||_{\epsilon} ^2 \leq D_0 \epsilon^3 \text{ and } \int_{\Lambda_i} (u^2 + v^2) \geq \eta_0 \epsilon^3
\end{equation}
for every $i=1, \dots, k$. It also holds

\begin{equation}
    \epsilon^2 \int_{\Lambda_i} (|\nabla u|^2 + |\nabla v|^2) \geq \eta_1 \epsilon^3 \text{ for some } \eta_1 >0 \; \forall i=1, \dots, k
\end{equation} 
and similarly $J_\epsilon (u,v) \geq \eta_2 \epsilon^3$ and $\int_{\Lambda_i} \langle \nabla h (u,v), (u,v) \rangle \geq \eta_3 \epsilon^3$ for every $i=1,\dots, k$ and some $\eta_2, \eta_3 >0$.
\end{proposition}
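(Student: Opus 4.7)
The plan is to derive the upper bound on $\|(u,v)\|_\epsilon^2$ first and then to use it, together with the defining conditions of $N_\epsilon$, to obtain the four lower bounds. The upper bound is the harder half; I would follow the scheme of \cite{ramos2008solutions}, adapted to the present nonlinearity.

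For the upper bound the starting point is the algebraic identity
\[
J_\epsilon(u,v) - \tfrac{1}{2}J_\epsilon'(u,v)(u,v) = \int\!\bigl[\tfrac{1}{2}(h_u u + h_v v) - h(x,(u,v))\bigr] \geq \tfrac{\delta''}{2}\!\int (h_u u + h_v v),
\]
where the inequality uses h5), preserved by the modification of $h$. Summing condition 2 of $N_\epsilon$ over $i$ gives $J_\epsilon'(u,v)(u\Phi, v\Phi) = 0$ with $\Phi := \sum_i \phi_i$, and hence $J_\epsilon'(u,v)(u,v) = J_\epsilon'(u,v)(u(1-\Phi), v(1-\Phi))$ is supported outside $\Lambda$, where hm) controls the nonlinearity by $|h_u u + h_v v| \leq \delta(u^2+v^2)$. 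To turn this into a genuine absorption I split $H \times H = H^+ \oplus H^-$, writing $u = u^+ + u^-$ and $v = u^+ - u^-$, so that $\|(u,v)\|_\epsilon^2 = 2(\|u^+\|_\epsilon^2 + \|u^-\|_\epsilon^2)$. Condition 1 tested with $\phi = u^-$ gives $\|u^-\|_\epsilon^2 = \tfrac{1}{2}\int (h_v - h_u)u^-$, which by h6) is bounded by $C_\mu\int(h_u u + h_v v) + (\mu/\alpha)\|(u,v)\|_\epsilon^2$. A parallel estimate on $\|u^+\|_\epsilon^2$ arises from the identity $J_\epsilon'(u,v)(u^+,u^+) = J_\epsilon'(u,v)(u,v)$ (which follows from condition 1), its localized companion $2\|u^+\|_{\epsilon,\Phi}^2 + 2\|u^-\|_{\epsilon,1-\Phi}^2 = \int(h_u + h_v)u^+ - \int(1-\Phi)(h_u u + h_v v) - \epsilon^2\!\int\langle \nabla \Phi, \nabla(uv)\rangle$, and again h6). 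Choosing first $\mu$ small, then $a$ small so that $\delta = \delta(a)$ is small, and finally $\epsilon_0$ small enough to absorb the $O(\epsilon)$ cutoff commutator, one gets $\|(u,v)\|_\epsilon^2 \leq C\int(h_u u + h_v v) + o_\epsilon(1)\|(u,v)\|_\epsilon^2$, so the key identity yields $\|(u,v)\|_\epsilon^2 \leq D_0 J_\epsilon(u,v) \leq D_0 C_0 \epsilon^3$.

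The $L^2$ lower bound $\int_{\Lambda_i}(u^2+v^2)\geq \eta_0\epsilon^3$ I would prove by contradiction and rescaling. Given sequences $(u_n,v_n)\in N_{\epsilon_n}$ with $\epsilon_n\to 0$, $J_{\epsilon_n}(u_n,v_n)\leq C_0\epsilon_n^3$ and $\int_{\Lambda_i}(u_n^2+v_n^2) = o(\epsilon_n^3)$, set $\tilde u_n(y) := u_n(x_i + \epsilon_n y)$ and similarly $\tilde v_n$. The upper bound just proved rescales to $H^1$-boundedness of $(\tilde u_n, \tilde v_n)$ on $\Lambda_i^{\epsilon_n}$, while the failure hypothesis gives $L^2$-smallness there. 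Sobolev interpolation together with the pointwise bound from h4) then yields $\int \phi_i (h_u u_n + h_v v_n) = o(\epsilon_n^3)$. Rewriting condition 2 of $N_\epsilon$ as
\[
\int 2\epsilon_n^2 \phi_i\langle \nabla u_n, \nabla v_n\rangle + 2V\phi_i u_n v_n = \int\phi_i(h_u u_n + h_v v_n) - \epsilon_n^2\!\int\langle\nabla\phi_i,\nabla(u_n v_n)\rangle,
\]
dividing by $\epsilon_n^3$ and passing to the weak limit forces the rescaled indefinite quadratic form on the left to vanish in the limit, which is incompatible with condition 3, $\int_{\Lambda_i}(u_n^2+v_n^2) > \epsilon_n^4$, read at the rescaled scale.

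The three remaining lower bounds follow quickly. The gradient bound $\epsilon^2\int_{\Lambda_i}(|\nabla u|^2+|\nabla v|^2)\geq \eta_1\epsilon^3$ comes from the $L^2$ bound via a Poincaré/Sobolev argument in the rescaled ball (otherwise $(\tilde u, \tilde v)$ would be a.e. constant, hence zero, contradicting the $L^2$ bound); the energy bound $J_\epsilon(u,v)\geq\eta_2\epsilon^3$ follows from the key identity together with h2), which gives $(h_u u + h_v v)\gtrsim u^{p+q}+v^{p+q}$ on $\{u,v\geq\epsilon'\}$, combined with the $L^2$ lower bound and a Sobolev interpolation; and $\int_{\Lambda_i}\langle \nabla h(u,v),(u,v)\rangle\geq\eta_3\epsilon^3$ is read off condition 2 restricted to $\phi_i$, using the gradient lower bound to prevent cancellation. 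The principal obstacle is the bookkeeping of the absorption in the upper-bound step: the smallness parameters $\mu$, $a$ and $\epsilon_0$ must be chosen in a definite order so that no circular estimate survives, which is made possible precisely by the cutoff modification yielding property hm).
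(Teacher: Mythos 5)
Your upper-bound argument is essentially the paper's: both reduce to the Nehari constraints, the $H^+\oplus H^-$ splitting, h5)--h6) and hm), and an absorption with $\mu$, $a$ and $\epsilon_0$ chosen in that order, so that half is fine. The genuine gap is in the lower bounds, and it is concentrated in your contradiction-and-rescaling proof of $\int_{\Lambda_i}(u^2+v^2)\geq\eta_0\epsilon^3$. Rescaled by $x\mapsto x_i+\epsilon_n x$, condition 3 of $N_{\epsilon_n}$ reads $\int_{\Lambda_i^{\epsilon_n}}(\tilde u_n^2+\tilde v_n^2)>\epsilon_n$, and $\epsilon_n\to0$: this is perfectly compatible with the $L^2$-smallness you are trying to contradict, so "read at the rescaled scale" it rules out nothing. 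Moreover, the quadratic form you drive to zero, $\int 2\epsilon_n^2\phi_i\langle\nabla u_n,\nabla v_n\rangle+2V\phi_i u_nv_n$, is (as you yourself note) indefinite, so its vanishing gives no control on $\int(u_n^2+v_n^2)$ in any case. The ingredient you are missing is that conditions 1 and 2 can be \emph{combined}: since $(v\phi_i,u\phi_i)=(u\phi_i,v\phi_i)+((v-u)\phi_i,-(v-u)\phi_i)$, membership in $N_\epsilon$ also forces $J_\epsilon'(u,v)(v\phi_i,u\phi_i)=0$, i.e. the \emph{definite} identity $\langle u,\phi_i u\rangle_\epsilon+\langle v,\phi_i v\rangle_\epsilon=\int\phi_i\,(h_u v+h_v u)$ up to cutoff commutators. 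Feeding h6) and the Sobolev inequality into the right-hand side yields the self-improving inequality $\epsilon^2 X+\int V(u^2+v^2)\phi_i\lesssim C_\mu X^3$ with $X=\int(|\nabla u|^2+|\nabla v|^2)\phi_i$; condition 3 enters only qualitatively, to guarantee $X>0$, and then $X\gtrsim\epsilon$ follows, giving the gradient bound first and the $L^2$ bound afterwards (using the clause $C_\mu\to0$ as $\mu\to\infty$ in h6)). This is the reverse of your order of deduction.

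Two of your remaining steps inherit the same defect. For $J_\epsilon(u,v)\geq\eta_2\epsilon^3$ you invoke h2) plus "Sobolev interpolation" from the $L^2$ lower bound: interpolation gives upper bounds on intermediate norms, never a lower bound on $\int(h_uu+h_vv)$, and h2) only bites on the set where both components exceed $\epsilon'$, which need not carry any mass. For $\int_{\Lambda_i}\langle\nabla h,(u,v)\rangle\geq\eta_3\epsilon^3$ you again read off condition 2 alone and hope the gradient bound "prevents cancellation", but the gradient bound controls $|\nabla u|^2+|\nabla v|^2$, not the signed quantity $\langle\nabla u,\nabla v\rangle$. Both estimates come out cleanly once you have the definite identity above: it lower-bounds $\int\phi_i(h_uv+h_vu)$ by $C\epsilon^3$, the second inequality of h6) with $\mu$ small converts this into $\int\phi_i(h_uu+h_vv)\geq\eta_3\epsilon^3$, and then $2J_\epsilon-J_\epsilon'(u,v)(u,v)\geq\delta''\int(h_uu+h_vv)$ from h5), together with the smallness of $J_\epsilon'(u,v)(u,v)$ outside $\Lambda$, gives $J_\epsilon\geq\eta_2\epsilon^3$.
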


\begin{proof}
Using properties h1) -h7) and hm), we can come up with the following inequality:

\begin{align}
||(u,v)||_\epsilon ^2 &= \langle u, \Phi u \rangle_\epsilon+ \langle v, \Phi v \rangle_\epsilon +\langle v-u, \xi (v-u)\rangle_\epsilon + 2\langle u,v \rangle_\epsilon- \langle v, \Phi u \rangle_\epsilon - \langle u, \Phi v \rangle_\epsilon \lesssim \nonumber\\
& (\mu + \delta + o_\epsilon(1))||(u,v)||_\epsilon ^2 + C_\mu (\delta + o_\epsilon (1)) ||(u,v)||_\epsilon ^2 + \epsilon^3,
\end{align}
where $\delta$ comes from condition hm) and $\mu, C_\mu$ from h6). Then by taking $\mu, \delta$ sufficiently small we can get $||(u,v)||_\epsilon ^2 \lesssim \epsilon ^3$. Now using Sobolev inequality we have 

\begin{equation}\label{u^5 v estimate}
\int_{\Lambda_i} (|u|^6 + |v|^6) \leq C \left( \int (|\nabla u|^2 + |\nabla v|^2 ) \phi_i \right)^{3} + \tilde{\delta} \int V(x) (u^2 + v^2)\phi_i,
\end{equation}
with $\tilde{\delta} \to 0$ as $\epsilon \to 0$, indeed we know $||(u,v)||_\epsilon ^2 \lesssim \epsilon ^3$ and thus $\int V(x) (u^2 + v^2)\phi_i <<1$. The assumption $\int_{\Lambda_i} (u^2 +v^2) \geq \epsilon ^4$ together with $||(u,v)||_\epsilon ^2 \leq D_0 \epsilon^3$ implies 

\begin{equation}
    C'\epsilon ^2 \int u^2 \leq D_0 ' \epsilon^5 \leq \frac{1}{4} \int V(x)(u^2 + v^2)\phi_i
\end{equation}
if $\epsilon$ is sufficiently small. Doing the same for $v$ we end up with

\begin{equation}\label{u,v estimate 1/2}
\langle u, \phi_i u \rangle_\epsilon + \langle v, \phi_i v \rangle_\epsilon \geq \frac{1}{2} \left( \epsilon ^2 \int (|\nabla u|^2 + |\nabla v|^2) \phi_i + \int V(x)(u^2 + v^2) \phi_i \right).
\end{equation}
At this point, exploiting again properties h1)-h7), hm) and \eqref{u,v estimate 1/2} we can prove that

\begin{align}\label{gradient exponent estimate}
    &\epsilon^2 \int (|\nabla u|^2 + |\nabla v|^2)\phi_i + \int V(x) (u^2 + v^2) \phi_i \leq 2 \langle u, \phi_i u \rangle_\epsilon + 2\langle v, \phi_i v \rangle_\epsilon = \nonumber \\
    &2 \int \langle \nabla h (x, (u,v)), (v,u) \rangle \phi_i \leq  
    2\mu \int (u^2 + v^2) \phi_i + 2C_\mu \int (|u|^6 + |v|^6)\phi_i \leq \nonumber \\
    &\left( \frac{2\mu}{\alpha} + 2C_\mu \tilde{\delta}\right) \int V(x) (u^2 + v^2) \phi_i + 2C C_\mu \left(\int (|\nabla u|^2 + |\nabla v|^2 )\phi_i\right)^3.
\end{align}
Choosing $\mu, C_\mu$ accordingly, we can prove the estimates on $\int_{\Lambda_i} (u^2 + v^2)$ and $\int_{\Lambda_i} (|\nabla u|^2 + |\nabla v|^2)$. As for the estimate on $J_\epsilon$, we use that

\begin{align}
2J_\epsilon (u,v)&=2 \langle u, v \rangle _\epsilon - 2\int h(x,(u,v))\geq ||u||_\epsilon ^2 + ||v||_\epsilon ^2 + \int_\Lambda \langle \overbrace{\nabla h (u,v), (u,v) \rangle - 2 h (u,v)}^{\geq 0} - \nonumber \\
&-\int_\Lambda \langle \nabla h (x, (u,v)), (v,u) \rangle - \delta ||(u,v)||_{L^2 (\mathbb{R}^3 \setminus \Lambda)}.
\end{align}
Indeed consider $\Phi \coloneqq \sum_{i=1} ^k \phi_i$, then we can use the estimate

\begin{align}
\int \langle \nabla h(x,(u,v)), (v,u) \rangle \Phi &= \langle u, \Phi u \rangle + \langle v, \Phi v\rangle \lesssim \nonumber\\ 
&\lesssim \mu ||(u,v)||_\epsilon ^2 + C_\mu (\delta + o_\epsilon (1))||(u,v)||_\epsilon ^2 + C_\mu  \epsilon ^3 ,
\end{align}
which can be rewritten as 

\begin{equation}
    J_\epsilon (u,v) \geq \eta_2 \epsilon ^3
\end{equation} 
for some $\eta_2 >0$. Finally, the last estimate comes from

\begin{align}
    &C_\mu \int \langle \nabla h(u,v) , (u,v) \rangle \phi_i + \mu \int (u^2 + v^2) \phi_i \geq \int(h_u (u,v)v + h_v(u,v)u) \phi_i = \nonumber \\
    &\langle u, \phi_i u\rangle_\epsilon + \langle v_i, \phi_i v\rangle_\epsilon \geq \frac{1}{2} \left( \epsilon ^2 \int (|\nabla u|^2 + |\nabla v|^2) \phi_i + \int V(x)(u^2 + v^2) \phi_i \right) \geq C \epsilon^3,
\end{align}
by considering a sufficiently small $\mu$. 
\end{proof}

\begin{proposition}\label{negativity of J prime plus J second}
Under the conditions of proposition \ref{||(u,v)||_epsilon estimate proposition}, there exists $\epsilon_0 >0$ such that for any $0 < \epsilon \leq \epsilon_0$, any $\psi \in H$, and any $i \in \{1,\dots, k\}$,

\begin{equation}
    J_\epsilon ' (u,v)(\phi_i ^2 u, \phi_i ^2 v)+ J_\epsilon ''(u,v)(\phi_i u + \psi, \phi_i v - \psi)(\phi_i u +  \psi, \phi_i v - \psi) <0.
\end{equation}
\end{proposition}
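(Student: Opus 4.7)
Denote by $A$ the expression to be shown negative. Unfolding the definitions of $J_\epsilon'$ and $J_\epsilon''$,
\begin{align*}
A = & \langle \phi_i^2 u, v\rangle_\epsilon + \langle u, \phi_i^2 v\rangle_\epsilon + 2\langle \phi_i u + \psi, \phi_i v - \psi\rangle_\epsilon \\
& - \int \phi_i^2 (h_u u + h_v v)  - \int \Big[ h_{uu}(\phi_i u+\psi)^2 + 2 h_{uv}(\phi_i u+\psi)(\phi_i v-\psi) + h_{vv}(\phi_i v-\psi)^2 \Big],
\end{align*}
where all $h$-derivatives are evaluated at $(x,(u,v))$. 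The plan has three ingredients: (i) apply h3) pointwise to the Hessian-of-$h$ integrand to extract the $\delta'$-gap; (ii) use the two identities defining $N_\epsilon$, tested respectively against $\phi_i\psi\in H$ and $(u\phi_i, v\phi_i)$, to collapse all $\psi$-linear cross terms; (iii) invoke Proposition~\ref{||(u,v)||_epsilon estimate proposition} so that the resulting $\delta'$-multiple of $\int\phi_i^2(h_u u+h_v v)$ dominates the residuals of order $\epsilon^3$.

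For step (i), h3) applied with $(a,b)=(\phi_i u+\psi,\phi_i v-\psi)$ gives, after expanding $a^2/u=\phi_i^2 u+2\phi_i\psi+\psi^2/u$ and $b^2/v=\phi_i^2 v-2\phi_i\psi+\psi^2/v$,
\[
h_{uu}a^2 + 2h_{uv}ab+h_{vv}b^2 \;\ge\; (1+\delta')\Bigl[\phi_i^2(h_u u+h_v v)+2\phi_i\psi(h_u-h_v)+\psi^2\bigl(\tfrac{h_u}{u}+\tfrac{h_v}{v}\bigr)\Bigr].
\]
Substituting into $A$ raises the coefficient of $\int\phi_i^2(h_u u+h_v v)$ to $-(2+\delta')$, produces a non-positive term $-(1+\delta')\int\psi^2\bigl(\tfrac{h_u}{u}+\tfrac{h_v}{v}\bigr)$, and leaves a $\psi$-linear term $-2(1+\delta')\int\phi_i\psi(h_u-h_v)$.

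For step (ii), testing the first Nehari identity $J_\epsilon'(u,v)(\phi,-\phi)=0$ against $\phi=\phi_i\psi$ yields $\int\phi_i\psi(h_u-h_v)=\langle \phi_i\psi, v-u\rangle_\epsilon$. Meanwhile the quadratic bracket expands as $2\langle \phi_i u+\psi, \phi_i v-\psi\rangle_\epsilon = 2\langle\phi_i u,\phi_i v\rangle_\epsilon - 2\|\psi\|_\epsilon^2 + 2\langle\psi,\phi_i(v-u)\rangle_\epsilon$, and a direct computation shows
\[
\langle\psi,\phi_i(v-u)\rangle_\epsilon - \langle \phi_i\psi, v-u\rangle_\epsilon \;=\; \epsilon^2\!\int \nabla\phi_i \cdot \bigl[(v-u)\nabla\psi - \psi\nabla(v-u)\bigr],
\]
a remainder that, thanks to Proposition~\ref{||(u,v)||_epsilon estimate proposition}, is $o_\epsilon(\epsilon^3) + o_\epsilon(1)\|\psi\|_\epsilon^2$. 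After the $\phi_i\psi$-brackets cancel, only $-2\delta'\langle\phi_i\psi, v-u\rangle_\epsilon$ survives from the $\psi$-linear contributions, and this is absorbed into the strictly negative $-2\|\psi\|_\epsilon^2-(1+\delta')\int\psi^2(h_u/u+h_v/v)$ via Young's inequality.

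In step (iii), the remaining pure-$(u,v)$ block is $\langle\phi_i^2 u, v\rangle_\epsilon+\langle u,\phi_i^2 v\rangle_\epsilon+2\langle\phi_i u,\phi_i v\rangle_\epsilon - (2+\delta')\int\phi_i^2(h_u u+h_v v)$. Cauchy--Schwarz plus Proposition~\ref{||(u,v)||_epsilon estimate proposition} bound the quadratic bracket by $2\|(u,v)\|_\epsilon^2=O(\epsilon^3)$, while the second Nehari condition together with $\int_{\Lambda_i}(h_u u+h_v v)\ge\eta_3\epsilon^3$ forces the last term below $-(2+\delta')\eta_3\epsilon^3$; the resulting gap $-\delta'\eta_3\epsilon^3$ dominates the $O(\epsilon^3)$ residuals once $\epsilon$ is small. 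The main obstacle, as expected, is this final balancing: the $\epsilon^2|\nabla\phi_i|^2$ cutoff remainders and the $\psi$-cross residuals must all be controlled by a fraction of the two strictly negative quantities $-\delta'\eta_3\epsilon^3$ and $-2\|\psi\|_\epsilon^2$, uniformly in $\psi\in H$; Proposition~\ref{||(u,v)||_epsilon estimate proposition} is precisely the technical tool that makes this uniform absorption possible.
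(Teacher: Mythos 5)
The paper itself gives no proof here (it defers entirely to Ramos--Tavares, noting only that the argument rests on three technical lemmas), so your attempt can only be judged on its own terms. Your skeleton is the right one --- expand $A$, apply h3) to the Hessian with the vector $(\phi_i u+\psi,\phi_i v-\psi)$, use the two Nehari identities, and control remainders via Proposition~\ref{||(u,v)||_epsilon estimate proposition} --- and your algebraic identities (the expansion $a^2/u=\phi_i^2u+2\phi_i\psi+\psi^2/u$, the commutator formula $\langle\psi,\phi_i(v-u)\rangle_\epsilon-\langle\phi_i\psi,v-u\rangle_\epsilon=\epsilon^2\int\nabla\phi_i\cdot[(v-u)\nabla\psi-\psi\nabla(v-u)]$) are correct. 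But the two absorption steps that actually close the argument do not work as you describe them, and this is where the real content of the proposition lies.

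First, the surviving $\psi$-linear term. Bounding $-2\delta'\langle\phi_i\psi,v-u\rangle_\epsilon$ by Cauchy--Schwarz in $\|\cdot\|_\epsilon$ and Young against $-2\|\psi\|_\epsilon^2$ leaves a residual of size $C\delta'^2 D_0\,\epsilon^3$, where $D_0$ is the (large, uncontrolled) constant from $\|(u,v)\|_\epsilon^2\le D_0\epsilon^3$; there is no reason this is dominated by $\delta'\eta_3\epsilon^3$, since $D_0$ and $\eta_3$ are unrelated. The term must instead be handled \emph{before} converting it to a scalar product: since $h_u,h_v\ge0$, the weighted Cauchy--Schwarz
\begin{equation}
\Bigl|\int\phi_i\psi\,(h_u-h_v)\Bigr|\;\le\;\Bigl(\int\phi_i^2\psi^2\bigl(\tfrac{h_u}{u}+\tfrac{h_v}{v}\bigr)\Bigr)^{1/2}\Bigl(\int (h_u u+h_v v)\Bigr)^{1/2}
\end{equation}
followed by Young produces $(1+\delta')\int\psi^2(h_u/u+h_v/v)+\tfrac{\delta'^2}{1+\delta'}\int(h_uu+h_vv)$, i.e.\ a multiple $\tfrac{\delta'^2}{1+\delta'}<\delta'$ of the \emph{same} dominant negative quantity rather than an unrelated $O(\epsilon^3)$; this is precisely why h3) is formulated with the diagonal matrix $\mathrm{diag}(h_u/s,h_v/t)$. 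Second, your step (iii) is arithmetically incoherent: you cannot both bound the quadratic bracket by $2\|(u,v)\|_\epsilon^2\le 2D_0\epsilon^3$ \emph{and} claim a net gap of $-\delta'\eta_3\epsilon^3$, since $2D_0\epsilon^3-(2+\delta')\eta_3\epsilon^3$ is in general positive. The bracket must be cancelled exactly against $2\int\phi_i^2(h_uu+h_vv)$ using the identity $J_\epsilon'(u,v)(\phi_i u,\phi_i v)=0$, and even then the mismatch between the weights $\phi_i$ and $\phi_i^2$ leaves terms supported on $\Lambda_i'\setminus\Lambda_i$, such as $\int\phi_i(1-\phi_i)[\epsilon^2\nabla u\cdot\nabla v+Vuv]$, which are not sign-definite and are only $O(D_0\epsilon^3)$ a priori; controlling them requires the modification hm) together with a separate smallness estimate for $\|(u,v)\|_\epsilon$ on $\mathbb{R}^3\setminus\Lambda$ that Proposition~\ref{||(u,v)||_epsilon estimate proposition} does not by itself provide. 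These two missing ingredients are essentially the technical lemmas the paper alludes to, so the proposal as written has a genuine gap rather than being a complete alternative proof.
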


\begin{proof}
See \cite{ramos2008solutions}. The proof is based on three technical lemmas.
\end{proof}

\begin{proposition}\label{N_epsilon is submanifold}
The set $N_\epsilon$ is a weakly closed $C^1$ submanifold of $H \times H$.
\end{proposition}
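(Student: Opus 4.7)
The plan is to realize $N_\epsilon$ as the intersection of an open set with the regular zero-level set of a $C^1$ map, and then invoke the implicit function theorem. Concretely, I would define
\begin{equation*}
G \colon H\times H \longrightarrow (H^-)' \times \mathbb{R}^k, \qquad G(u,v) = \bigl(\, J_\epsilon'(u,v)|_{H^-},\; F_1(u,v),\dots,F_k(u,v)\,\bigr),
\end{equation*}
with $F_i(u,v) \coloneqq J_\epsilon'(u,v)(\phi_i u, \phi_i v)$, and write $N_\epsilon = G^{-1}(0) \cap \Omega_\epsilon$, where $\Omega_\epsilon = \{(u,v) : \int_{\Lambda_i}(u^2+v^2) > \epsilon^4 \text{ for every } i\}$ is open. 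Since $J_\epsilon$ is $C^2$ under h1)-h4), $G$ is $C^1$.

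The heart of the argument is showing that $dG(u,v)$ is surjective at every $(u,v) \in N_\epsilon$, which I split into two blocks. For the $(H^-)'$ component, variations of the form $(\rho,-\rho)\in H^-$ produce exactly the operator $-2\,\mathrm{Id}-H_h(u,v)$ studied in Lemma \ref{lemma2}, which is an isomorphism $H^-\to (H^-)'$ by the Fredholm alternative and the compactness of Lemma \ref{lemma1}; consequently the single constraint $J_\epsilon'|_{H^-}=0$ already cuts out a $C^1$ submanifold $\mathcal{M}_1$ of $H\times H$. On $\mathcal{M}_1$ I test the differentials of $F_1,\dots, F_k$ against the $k$ tangent vectors
\begin{equation*}
\tau_j \coloneqq (\phi_j u + \psi_j,\; \phi_j v - \psi_j),
\end{equation*}
where $\psi_j\in H$ is the correction, unique by Lemma \ref{lemma2}, that makes $\tau_j$ tangent to $\mathcal{M}_1$. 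For the diagonal, the symmetry of $J_\epsilon''$, the tangency identity $J_\epsilon''(u,v)(\tau_i,(\psi_i,-\psi_i))=0$, and the fact that $(\phi_i \psi_i, -\phi_i \psi_i)\in H^-$ lies in the kernel of $J_\epsilon'(u,v)$, let me rewrite
\begin{equation*}
dF_i(u,v)[\tau_i] = J_\epsilon'(u,v)(\phi_i^2 u, \phi_i^2 v) + J_\epsilon''(u,v)\bigl(\tau_i,\tau_i\bigr),
\end{equation*}
which is strictly negative by Proposition \ref{negativity of J prime plus J second}. For the off-diagonal ($i\neq j$) the cutoffs $\phi_i,\phi_j$ have disjoint supports, so all pointwise products $\phi_i\phi_j$, $\nabla\phi_i\cdot\nabla\phi_j$ and $\phi_i\nabla\phi_j$ vanish and, after decomposing $(\phi_i u,\phi_i v)$ along $H^+\oplus H^-$ and using tangency to kill the $H^-$ component, $dF_i(u,v)[\tau_j]$ reduces to a residual coupling of $\psi_j$ with $\phi_i(u+v)$ through $H_h(u,v)$. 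The estimates of Propositions \ref{prop3.1} and \ref{||(u,v)||_epsilon estimate proposition} bound this residual by $o_\epsilon(\epsilon^3)$, while the diagonal has size of order $\epsilon^3$, so the matrix $(dF_i[\tau_j])$ is diagonally dominant, hence invertible; the implicit function theorem then yields the $C^1$ submanifold structure.

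Weak sequential closedness is comparatively soft: if $(u_n,v_n)\in N_\epsilon$ weakly converges to $(u,v)$ in $H\times H$, the quadratic part of $J_\epsilon'$ is weakly continuous and the nonlinear part acts compactly on test functions thanks to Lemma \ref{lemma1}, so both equality conditions pass to the limit. The strict inequality $\int_{\Lambda_i}(u_n^2+v_n^2)>\epsilon^4$ passes to $\geq \epsilon^4$ by Rellich-Kondrachov on the bounded domain $\Lambda_i$, but on the natural subset where $J_\epsilon$ stays bounded Proposition \ref{||(u,v)||_epsilon estimate proposition} enforces $\int_{\Lambda_i}(u^2+v^2)\geq \eta_0 \epsilon^3 \gg \epsilon^4$, which preserves strictness. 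The main obstacle in this plan, as in \cite{ramos2008solutions}, is precisely the off-diagonal estimate in Block 2: one needs genuine quantitative control on $\|\psi_j\|_\epsilon$ and on the overlap of $\psi_j$ with $\phi_i(u+v)$ through the Hessian of $h$, relying on the scaling of the ansatz from Proposition \ref{prop3.1}. Everything else is a clean assembly of the implicit function theorem, the Fredholm setup of Lemma \ref{lemma2}, and the strict negativity coming from Proposition \ref{negativity of J prime plus J second}.
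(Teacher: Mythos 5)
Your proposal follows essentially the same route as the paper: realize $N_\epsilon$ as the zero locus of $(u,v)\mapsto\bigl(J_\epsilon'(u,v)(\phi_1 u,\phi_1 v),\dots,J_\epsilon'(u,v)(\phi_k u,\phi_k v),\,PJ_\epsilon'(u,v)\bigr)$, invert the $H^-$ block via the Fredholm setup of Lemma \ref{lemma2}, use the strict negativity of Proposition \ref{negativity of J prime plus J second} for the finite-dimensional block, and conclude with the implicit function theorem, which is exactly the argument the paper sketches and defers to \cite{ramos2008solutions}. The one minor imprecision is that the quantitative control on the off-diagonal entries and on the weak-closure step should be drawn from Proposition \ref{||(u,v)||_epsilon estimate proposition} (which applies to arbitrary points of $N_\epsilon$ in a sublevel set of $J_\epsilon$) rather than from Proposition \ref{prop3.1}, which only concerns the particular ansatz constructed there.
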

\begin{proof}

Consider $N_\epsilon$ as the zero locus of $K_\epsilon (u,v)=0$, with $K_\epsilon : H\times H \to \mathbb{R}^k \times H^-$ defined as

\begin{equation}
    K_\epsilon (u,v) \coloneqq (J_\epsilon ' (u,v) (\phi_1 u, \phi_1 v), \dots, J_\epsilon ' (u,v) (\phi_k u, \phi_k v), PJ_\epsilon '(u,v))
\end{equation}
where $P: H\times H \to H^- = \{ (\phi, -\phi)| \phi \in H\}$ is the orthogonal projection. It can be proved, exploiting Proposition \ref{negativity of J prime plus J second}, that it is a $C^1$ submanifold thanks to the implicit function theorem. The weak closure can also be easily checked.
\end{proof}

\end{section}

\begin{section}{\textbf{PROOF OF THE MAIN THEOREM}}

We recall that 

\begin{equation}
    c_\epsilon \coloneqq \inf_{N_\epsilon} J_\epsilon
\end{equation}
and we denote

\begin{equation}\label{definition of J_epsilon ^i}
J_\epsilon ^i (u,v) \coloneqq \int_{\tilde{\Lambda}_i} \{\epsilon^2 \langle \nabla u, \nabla v \rangle + V(x) uv \} - h(x,(u,v)).
\end{equation}
We are now ready to prove that there exists a minimum point $(u_\epsilon, v_\epsilon)$ for $J_\epsilon$ restricted to $N_\epsilon$ that actually solves $J_\epsilon ' (u_\epsilon,v_\epsilon)=0$.

\begin{theorem}\label{u epsilon v epsilon is critic point and ground state estimate}
    For every sufficiently small $\epsilon >0$ there exists $(u_\epsilon, v_\epsilon) \in N_\epsilon$ such that
    
    \begin{equation}
        J_\epsilon (u_\epsilon, v_\epsilon)= c_\epsilon \text{ and } J_\epsilon ' (u_\epsilon, v_\epsilon) =0.
    \end{equation} 
    Moreover,
    
    \begin{equation}
        J_\epsilon ^i (u_\epsilon, v_\epsilon)= \epsilon ^{3}(c_i + o_ \epsilon (1)) \; \forall i \text{ and } c_\epsilon = \epsilon ^3 \left( \sum_{i=1}^k c_i + o_\epsilon (1) \right) \text{ as } \epsilon \to 0.
    \end{equation}
\end{theorem}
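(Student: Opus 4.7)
The plan is to find $(u_\epsilon,v_\epsilon)$ as a minimizer of $J_\epsilon$ on $N_\epsilon$, to show via a Lagrange-multiplier argument that any such minimizer is in fact an unconstrained critical point of $J_\epsilon$, and then to match the upper bound on $c_\epsilon$ supplied by Proposition \ref{prop3.1} against a lower bound obtained by rescaling around each $x_i$ and invoking Remark \ref{remark Fatou ground state}. The upper bound is immediate: the pair $(\bar u_\epsilon,\bar v_\epsilon)$ of Proposition \ref{prop3.1} lies in $N_\epsilon$ with $J_\epsilon(\bar u_\epsilon,\bar v_\epsilon)=\epsilon^3(\sum_{i=1}^k c_i+o_\epsilon(1))$, so $c_\epsilon\leq \epsilon^3(\sum_i c_i+o_\epsilon(1))$. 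For existence I take a minimizing sequence $(u_n,v_n)\subset N_\epsilon$ and refine it via Ekeland's principle on $N_\epsilon$. Proposition \ref{||(u,v)||_epsilon estimate proposition} applied with any $C_0>\sum_i c_i$ gives both the uniform bound $||(u_n,v_n)||_\epsilon^2\leq D_0\epsilon^3$ and the mass bound $\int_{\Lambda_i}(u_n^2+v_n^2)\geq \eta_0\epsilon^3$ for every $i$. Weak compactness combined with Rellich-Kondrachov on growing balls (diagonal subsequence) yields a weak and a.e.\ pointwise limit $(u_\epsilon,v_\epsilon)$, and the weak closure of $N_\epsilon$ from Proposition \ref{N_epsilon is submanifold} puts $(u_\epsilon,v_\epsilon)\in N_\epsilon$. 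To see the limit attains $c_\epsilon$, I use the identity of Remark \ref{remark Fatou ground state}, $2J_\epsilon(u,v)=\int[\langle \nabla h(x,(u,v)),(u,v)\rangle - 2 h(x,(u,v))]+J_\epsilon'(u,v)(u,v)$, where h5) makes the integrand nonnegative and the Ekeland correction combined with the Nehari tangential conditions forces $J_\epsilon'(u_n,v_n)(u_n,v_n)=o_n(1)$; Fatou's lemma then delivers $J_\epsilon(u_\epsilon,v_\epsilon)\leq \liminf_n J_\epsilon(u_n,v_n)=c_\epsilon$, hence equality.

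Next I upgrade the constrained minimizer to a free critical point. Since $N_\epsilon=K_\epsilon^{-1}(0)$, there exist Lagrange multipliers $(\mu_1,\dots,\mu_k,\zeta)\in \mathbb{R}^k\times H^-$ with $J_\epsilon'(u_\epsilon,v_\epsilon)$ equal to the corresponding linear combination of differentials of the defining equations. Pairing this identity with $(\psi,-\psi)$, $\psi\in H$, the left-hand side vanishes by the $H^-$-Nehari condition inside $N_\epsilon$, and the strict positive-definiteness of $F''$ on $H^-$ recorded in Lemma \ref{lemma2} lets me solve uniquely for $\zeta$ as a linear function of the $\mu_j$'s. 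Substituting back and pairing with $(\phi_j u_\epsilon+\tilde\psi_j,\phi_j v_\epsilon-\tilde\psi_j)$, where $\tilde\psi_j\in H$ is the $H$-component associated with $\mu_j$, Proposition \ref{negativity of J prime plus J second} supplies a strictly negative diagonal term in the resulting linear system for $(\mu_j)$, forcing $\mu_1=\dots=\mu_k=0$ and then $\zeta=0$. Hence $J_\epsilon'(u_\epsilon,v_\epsilon)=0$.

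For the local energy identities I rescale $\tilde u_\epsilon^i(y):=u_\epsilon(x_i+\epsilon y)$, $\tilde v_\epsilon^i(y):=v_\epsilon(x_i+\epsilon y)$. The $||\cdot||_\epsilon$-bound becomes a uniform $H^1$-bound in the rescaled variable, the equation $J_\epsilon'(u_\epsilon,v_\epsilon)=0$ transfers into $I_{V(x_i)}'(\tilde u_\epsilon^i,\tilde v_\epsilon^i)\to 0$ on compacta, and the rescaled mass bound $\int_{\Lambda_i^\epsilon}((\tilde u_\epsilon^i)^2+(\tilde v_\epsilon^i)^2)\geq \eta_0$, together with the integral lower bound on $\int_{\Lambda_i}\langle \nabla h(u,v),(u,v)\rangle$ from Proposition \ref{||(u,v)||_epsilon estimate proposition}, prevents the weak limit $(u_*,v_*)$ from being trivial in either component. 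Thus $(u_*,v_*)$ solves the autonomous system \eqref{system with V(x_i)}, and Remark \ref{remark Fatou ground state} gives $\liminf_{\epsilon\to 0}\epsilon^{-3}J_\epsilon^i(u_\epsilon,v_\epsilon)\geq I_{V(x_i)}(u_*,v_*)\geq c_i$. Summing over $i$, where the $\tilde\Lambda_i$ are disjoint and the complement contribution is nonnegative modulo an $o_\epsilon(\epsilon^3)$ thanks to property hm) combined with the $H^-$-Nehari identity, I obtain $c_\epsilon\geq \epsilon^3(\sum_i c_i+o_\epsilon(1))$, and comparison with the upper bound forces equality in both identities of the theorem. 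The main obstacle is the Lagrange-multiplier vanishing: without the strict negativity afforded by Proposition \ref{negativity of J prime plus J second} the linear system for $(\mu_j)$ could admit nontrivial solutions and $(u_\epsilon,v_\epsilon)$ would only solve a constrained equation; a secondary subtlety is ruling out trivial rescaled limits in either component, which is exactly why the quantitative mass lower bounds of Proposition \ref{||(u,v)||_epsilon estimate proposition} were set up in advance.
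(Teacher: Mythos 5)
Your proposal follows essentially the same route as the paper: Ekeland's principle on $N_\epsilon$, the quantitative bounds of Proposition \ref{||(u,v)||_epsilon estimate proposition}, the strict negativity of Proposition \ref{negativity of J prime plus J second} to force the Lagrange multipliers to vanish, weak closure plus Fatou for attainment of $c_\epsilon$, the upper bound from Proposition \ref{prop3.1}, and the rescaled lower bound via Remark \ref{remark Fatou ground state}. The only (immaterial) difference is ordering: the paper kills the multipliers along the constrained Palais--Smale sequence before passing to the limit, whereas you pass to the weak limit first and apply the multiplier rule at the minimizer.
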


\begin{proof}
We consider a constrained Palais-Smale sequence for $J_\epsilon$ in $N_\epsilon$ at level $c_\epsilon$, that is there exist (thanks to Lagrange multipliers) $\lambda_i ^n \in \mathbb{R}$ with $i=1, \dots, k$ and $\psi_n \in H$ such that for every $(\zeta , \xi) \in H \times H$ we have:

\begin{equation}\label{lagrange multiplier estimate}
J_\epsilon ' (u_n, v_n)(\zeta, \xi)= J_\epsilon ' (u_n, v_n)(\Phi_n \zeta, \Phi_n \xi) + J_\epsilon '' (u_n, v_n) (\Phi_n u_n + \psi_n , \Phi_n v_n - \psi_n )( \zeta, \xi) + o_n (1)
\end{equation}
where $o_n (1) \to 0$ uniformly for bounded $\zeta, \xi$ and $\Phi_n \coloneqq \sum_i \lambda_i ^n \phi_i$.  The existence of the sequence $(u_n, v_n)$ can be derived from Ekeland variational principle (see \cite{struwe2000variational}).
 
% \begin{remark}
 
% We actually want to apply a Corollary of the Ekeland variational principle, which is stated in Corollary 5.3, Chapter 1 in \cite{struwe2000variational}. It would be needed a Banach space, while we only have a Banach manifold. This is not, in fact, a  problem. We can indeed consider a sequence $\{(u_n, v_n)\} \subset N_\epsilon$ such that 
 %$$J_\epsilon (u_n, v_n) \to c_\epsilon.$$
 
 %Since $J_\epsilon (u_n, v_n)$ is bounded, we have that $||(u_n ,v_n)||_\epsilon$ is bounded thanks to Proposition \ref{||(u,v)||_epsilon estimate proposition}, and thus it admits, up to a  subsequence, a weak limit $(u_\epsilon,v_\epsilon) \in N_\epsilon$. Then we can simply consider an open neighborhood of $(u_\epsilon, v_\epsilon)$ which will be isomorphic to a Banach space through a chart because $N_\epsilon$ is a submanifold.
% \end{remark}
 
 \noindent Let $\lambda_n \coloneqq (\sum_i (\lambda_i ^n)^2)^{1/2}$, we claim that

\begin{equation}\label{lambda_n to zero psi_n to zero} \lambda_n \to 0 \text{ and } \psi_n \to 0.
\end{equation}
To make computations easier, suppose $k=1$, so that there is only one $\lambda_n$ and only one cutoff function $\phi$. The general case can be done in the same way. Considering $\zeta= (\lambda_n \phi u_n + \psi_n)/\lambda_n$ and $\xi= (\lambda_n \phi v_n - \psi_n)/\lambda_n$, so that $J_\epsilon ' (u_n, v_n)(\zeta, \xi)=0$, we get:

\begin{equation}
    \frac{\eta \epsilon ^3}{2} \lambda_n ^2 + ||\psi_n||_\epsilon ^2 \leq o_\epsilon (1)\epsilon ^{3/2} |\lambda_n| ||\psi_n||_\epsilon + o_n(1) |\lambda_n|
\end{equation} 
where it is necessary to use an upper bound contained in the proof of Proposition \ref{negativity of J prime plus J second} (see \cite{ramos2008solutions}). Using the weak closure of $N_\epsilon$ and Fatou lemma, we can easily prove that $J_\epsilon (u_\epsilon, v_\epsilon)=c_\epsilon$, that is $(u_\epsilon, v_\epsilon)$ is a constrained minimum point for $J_\epsilon$. As for the energy estimate, we can do that by noting that $J_\epsilon (u_\epsilon , v_\epsilon)|_{\mathbb{R}^3 \setminus \Lambda}= o_\epsilon (\epsilon ^3)$, meaning that

\begin{equation}
    \lim_{\epsilon \to 0} \frac{J_\epsilon (u_\epsilon , v_\epsilon)|_{\mathbb{R}^3 \setminus \Lambda}}{\epsilon ^3} \to 0 ,
\end{equation} 
so

\begin{equation}
    J_\epsilon (u_\epsilon , v_\epsilon) = \sum_{i=1}^k J_\epsilon (u_\epsilon \phi _i, v_\epsilon, \phi_i) + o_\epsilon (\epsilon ^3),
\end{equation}
and thanks to Proposition \ref{prop3.1} we know that $c_\epsilon \leq \epsilon ^3 \left(\sum_{i=1}^k c_i + o_\epsilon (1) \right)$. The other inequality, and the estimates on $J_\epsilon ^i (u_\epsilon, v_\epsilon)$ rely on an approximation of $J_\epsilon ^i$ with $\epsilon ^3 I_{V(x_i)} (u_i ^\epsilon,, v_i ^\epsilon)$, with $u_i ^\epsilon  \coloneqq u_\epsilon (x_i + \epsilon x)$ and $v_i ^\epsilon (x)= v_\epsilon (x_i + \epsilon x)$. Using Lemma \ref{lemma6} and Remark \ref{remark Fatou ground state} which tells us that $I_{V(x_i)} (u_i ^\epsilon, v_i ^\epsilon ) \geq c_i + o_\epsilon (1)$ we can conclude.
\end{proof}

\begin{theorem}\label{limsup liminf theorem}

The couple $(u_\epsilon, v_\epsilon)$ satisfy (\ref{main system}) and moreover it holds that

\begin{equation}
    \lim_{\epsilon \to 0} \sup_{\mathbb{R}^3 \setminus \Lambda} \{u_\epsilon, v_\epsilon\} =0 \text{ and } \liminf_{\epsilon \to 0} \min \{ \sup_{\Lambda_i} u_\epsilon, \sup_{\Lambda_i} v_\epsilon \} >0
\end{equation}
for every $i=1 \dots, k$.
\end{theorem}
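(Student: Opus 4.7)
The statement packages three distinct conclusions: that $(u_\epsilon,v_\epsilon)$ solves \eqref{main system}, that $\sup_{\mathbb{R}^3\setminus\Lambda}\{u_\epsilon,v_\epsilon\}\to 0$, and that both $\sup_{\Lambda_i}u_\epsilon$ and $\sup_{\Lambda_i}v_\epsilon$ stay bounded away from zero. Since by construction $h(x,(s,t))\equiv h(s,t)$ whenever $x\in\Lambda$ or $\sqrt{s^2+t^2}\leq a$, the first conclusion follows automatically from the second: once $\sup_{\mathbb{R}^3\setminus\Lambda}\{u_\epsilon,v_\epsilon\}<a$, the modified system \eqref{modified system} coincides with \eqref{main system}, and Lemma \ref{regularity of solutions} then yields the classical, positive, $H^2\cap C^2$ solution claimed. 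My plan is therefore to prove the two sup estimates separately, vanishing first and lower bound second.

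For the vanishing outside $\Lambda$ I will combine the uniform bound $\|(u_\epsilon,v_\epsilon)\|_\epsilon^2\lesssim\epsilon^3$ from Proposition \ref{||(u,v)||_epsilon estimate proposition} and the energy concentration $J_\epsilon^i(u_\epsilon,v_\epsilon)=\epsilon^3(c_i+o_\epsilon(1))$ from Theorem \ref{u epsilon v epsilon is critic point and ground state estimate}, which together force the $\epsilon$-weighted energy on $\mathbb{R}^3\setminus\tilde\Lambda$ (and on the thin shell $\tilde\Lambda\setminus\Lambda'$) to be $o_\epsilon(\epsilon^3)$, with a maximum principle supplied by hm). Adding the two equations of \eqref{modified system} and applying hm) yields
\begin{equation*}
-\epsilon^2\Delta(u_\epsilon+v_\epsilon)+(V-\sqrt{2}\,\delta)(u_\epsilon+v_\epsilon)\leq 0\quad\text{on }\mathbb{R}^3\setminus\Lambda,
\end{equation*}
with $V-\sqrt{2}\,\delta\geq\alpha/2>0$ provided $a$ (hence $\delta$) is chosen small enough. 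A Moser iteration on rescaled balls $B_\epsilon(x)$ for $x$ in a thin shell around $\partial\Lambda$, whose rescaled $H^1$-energy is $o_\epsilon(1)$ by the concentration, controls $u_\epsilon+v_\epsilon$ on $\partial\Lambda$ by a quantity vanishing with $\epsilon$; comparison with an exponential barrier then propagates this smallness to all of $\mathbb{R}^3\setminus\Lambda$.

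For the lower bound I will argue by contradiction. Assume some $i$ and $\epsilon_n\to 0$ give $\sup_{\Lambda_i}u_{\epsilon_n}\to 0$ (the $v$-case is symmetric), and set $\tilde u_n(y):=u_{\epsilon_n}(x_i+\epsilon_n y)$, $\tilde v_n(y):=v_{\epsilon_n}(x_i+\epsilon_n y)$. By Proposition \ref{||(u,v)||_epsilon estimate proposition}, $(\tilde u_n,\tilde v_n)$ is bounded in $H^1(\mathbb{R}^3)$ and $\int_{\Lambda_i^{\epsilon_n}}(\tilde u_n^2+\tilde v_n^2)\geq\eta_0>0$, where $\Lambda_i^{\epsilon_n}:=(\Lambda_i-x_i)/\epsilon_n$ expands to $\mathbb{R}^3$, while the contradiction hypothesis translates into $\tilde u_n\to 0$ locally uniformly. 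Testing the rescaled $\tilde v_n$-equation with $\tilde v_n$ and splitting the right-hand side over $\Lambda_i^{\epsilon_n}$ and its complement, the former integral vanishes by h4) (every summand of $h_u\tilde v_n$ carries a factor of $\tilde u_n$) together with Sobolev interpolation, while the latter is $\leq\delta\int\tilde v_n^2$ by hm) and is absorbed into $\alpha\int\tilde v_n^2$ for $a$ small. This yields $\int(|\nabla\tilde v_n|^2+V(x_i+\epsilon_n y)\tilde v_n^2)\to 0$, which combined with $\tilde u_n\to 0$ in $L^2_{\mathrm{loc}}$ contradicts the $L^2$ lower bound. The main obstacle is ruling out that the $L^2$ mass of $\tilde v_n$ escapes to infinity, i.e., concentrates at some $y_n\to\infty$: this is handled either by a Lions vanishing argument together with h4)--h6), or by the direct absorption just described, both crucially relying on the smallness of $\delta=\delta(a)$.
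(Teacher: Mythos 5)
Your overall architecture is the paper's: first drive $u_\epsilon,v_\epsilon$ to zero on $\partial\Lambda$ (hence, via hm) and a maximum-principle/barrier comparison, on all of $\mathbb{R}^3\setminus\Lambda$, so that the modification of $h$ never activates and \eqref{modified system} reduces to \eqref{main system}), and then obtain the interior lower bound by contradiction, rescaling and a Lions-type vanishing argument against the $L^2$ lower bound of Proposition \ref{||(u,v)||_epsilon estimate proposition}. The contradiction argument for the spike is essentially correct as you state it (modulo a cutoff localizing the test function away from the other $\Lambda_j$, where hm) is not available), and your barrier inequality $-\epsilon^2\Delta(u_\epsilon+v_\epsilon)+(V-\sqrt{2}\,\delta)(u_\epsilon+v_\epsilon)\le 0$ outside $\Lambda$ is the right tool.

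The genuine gap is the sentence asserting that the rescaled $H^1$-energy on $\epsilon$-balls in a shell around $\partial\Lambda$ is $o_\epsilon(1)$ ``by the concentration.'' This does not follow from $J_\epsilon^i(u_\epsilon,v_\epsilon)=\epsilon^3(c_i+o_\epsilon(1))$: the functional $J_\epsilon$ is indefinite, so a small value of $J_\epsilon^i$ on $\tilde\Lambda_i$ carries no information about where the positive quantity $\|(u_\epsilon,v_\epsilon)\|_\epsilon^2$ is located inside $\tilde\Lambda_i$; a priori the mass could sit right on $\partial\Lambda_i$, and then your Moser iteration would give nothing. Ruling this out is exactly the content of the paper's ``first step,'' and it requires ingredients your plan never invokes: if $\sup_{\partial\Lambda_i}\{u_\epsilon,v_\epsilon\}$ did not vanish, one extracts (after translation to boundary points $z_\epsilon\in\partial\Lambda_i$ and rescaling) a nontrivial limit profile solving the autonomous system with constant potential $\lambda=\lim V(z_\epsilon)\ge\inf_{\partial\Lambda_i}V>\inf_{\Lambda_i}V$, whose energy is at least $c(\lambda)>c_i$ by the strict monotonicity of $\lambda\mapsto c(\lambda)$ (Lemma \ref{ground state monotonicity with lambda}) together with Remark \ref{remark Fatou ground state}; this violates the upper bound $J_\epsilon^i=\epsilon^3(c_i+o_\epsilon(1))$. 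In particular hypothesis V2) (strictness of the local minimum) is essential precisely at this point, and a correct proof must use it; as written, your argument for the decay outside $\Lambda$ would go through for a potential with $\inf_{\Lambda_i}V=\inf_{\partial\Lambda_i}V$, for which the conclusion can fail.
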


\begin{proof}

The proof is very technical, and we remind the interested reader to \cite{ramos2008solutions} for the details. The first step consists in showing that 

\begin{equation}
    \sup_{\partial \Lambda_i} \{u_\epsilon, v_\epsilon\} \to 0 \text{ as } \epsilon \to 0.
\end{equation}
Then thanks to the maximum principle we can see that, outside of $\Lambda$, $u_\epsilon$ and $v_\epsilon$ tend to 0. This also means that for a sufficiently small $\epsilon$, the modification on $h$ never occurs and $(u_\epsilon, v_\epsilon)$ satisfy the initial system. To prove that inside $\Lambda_i$ there is a "spike", we proceed by absurd. If we suppose that, for instance, $\sup_{\Lambda_i} u_\epsilon \to 0$, we can use a result by Lions (see Theorem 1.21 in \cite{willem1997minimax}) to prove that this would imply 

\begin{equation}
    \liminf_{\epsilon \to 0} \int_{\Lambda_i ^\epsilon} (u^\epsilon)^2 + (v^\epsilon)^2=\liminf_{\epsilon \to 0} \frac{1}{\epsilon^3} \int_{\Lambda_i} (u^2 + v^2) =0
\end{equation}
which is absurd thanks to what we have proved in Proposition \ref{||(u,v)||_epsilon estimate proposition}.
 \end{proof}
 
\begin{theorem}

The functions $u_\epsilon$, $v_\epsilon$ satisfy the properties (i)-(iv) of Theorem \ref{main theorem}.
\end{theorem}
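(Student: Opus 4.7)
All four properties follow by combining Theorem \ref{limsup liminf theorem} with the energy estimates of Theorem \ref{u epsilon v epsilon is critic point and ground state estimate}, the monotonicity in Lemma \ref{ground state monotonicity with lambda}, a rescaling/compactness argument around the concentration points, and a comparison argument for the exponential decay.

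First I would establish (i) and (ii) together. By Theorem \ref{limsup liminf theorem}, for small $\epsilon$ we have $\sup_{\mathbb{R}^3 \setminus \Lambda}\{u_\epsilon,v_\epsilon\} \to 0$ while $\sup_{\Lambda_i} u_\epsilon$ and $\sup_{\Lambda_i} v_\epsilon$ stay bounded below by some $b>0$, so each component attains an interior maximum in $\Lambda_i$. Taking a maximum $y_{i,\epsilon}$ of $u_\epsilon$ in $\overline{\Lambda_i}$ and rescaling
\begin{equation}
\tilde u_\epsilon(y) \coloneqq u_\epsilon(y_{i,\epsilon}+\epsilon y), \qquad \tilde v_\epsilon(y) \coloneqq v_\epsilon(y_{i,\epsilon}+\epsilon y),
\end{equation}
the uniform $C^{2,\alpha}_{\mathrm{loc}}$ bounds from Lemma \ref{regularity of solutions} together with the mass lower bound of Proposition \ref{||(u,v)||_epsilon estimate proposition} yield subsequential convergence to a non-trivial positive classical solution $(U,W)$ of the limit system with constant potential $\lambda \coloneqq \lim V(y_{i,\epsilon})$. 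The Fatou-type argument of Remark \ref{remark Fatou ground state} combined with the local energy estimate $J_\epsilon^i(u_\epsilon,v_\epsilon)=\epsilon^3(c_i+o_\epsilon(1))$ forces $c(\lambda) \leq c_i$; by the monotonicity of $\lambda \mapsto c(\lambda)$ in Lemma \ref{ground state monotonicity with lambda} and the strict-minimum hypothesis V2), this is possible only for $\lambda = V(x_i)$, yielding (ii). To obtain the common maximum required by (i), I would apply a moving-plane argument to the limit $(U,W)$; the cooperative structure encoded in the nonnegativity of $h_{uv}$ in h1) forces both $U$ and $W$ to be radially symmetric and strictly decreasing about a single point, so $\tilde u_\epsilon$ and $\tilde v_\epsilon$ have unique non-degenerate maxima within distance $o(1)$ of each other, hence $u_\epsilon$ and $v_\epsilon$ have maxima within $o(\epsilon)$ in $\Lambda_i$, which I then identify as a common point $x_{i,\epsilon}$.

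Property (iii) is a standard Agmon-type barrier estimate. From property hm) applied outside $\Lambda'$,
\begin{equation}
-\epsilon^2 \Delta u_\epsilon + V(x) u_\epsilon \leq \delta \sqrt{u_\epsilon^2 + v_\epsilon^2},
\end{equation}
and similarly for $v_\epsilon$; summing, $w_\epsilon \coloneqq u_\epsilon + v_\epsilon$ satisfies
\begin{equation}
-\epsilon^2 \Delta w_\epsilon + (\alpha - C\delta)\, w_\epsilon \leq 0 \quad \text{in } \mathbb{R}^3 \setminus \left(\Lambda' \cup \cup_{j\neq i}\Lambda_j\right),
\end{equation}
with $\alpha-C\delta>0$ provided $a$ was chosen small enough in the construction of $\tilde h$. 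The barrier $\Gamma_\epsilon(x)=\gamma e^{-\beta|x-x_{i,\epsilon}|/\epsilon}$ with $\beta^2 < \alpha - C\delta$ is a classical supersolution of the same inequality away from $x_{i,\epsilon}$; picking $\gamma$ large enough so that $\Gamma_\epsilon \geq w_\epsilon$ on the boundary of the barrier domain, which is possible by Theorem \ref{limsup liminf theorem} together with the uniform boundedness near $x_{i,\epsilon}$, the comparison principle gives (iii).

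Property (iv) is proven by contradiction. Assume some local maximum $z_\epsilon$ of, say, $u_\epsilon$ satisfies $u_\epsilon(z_\epsilon) \geq b'>0$ along a subsequence with $z_\epsilon$ separated from every $x_{i,\epsilon}$ at scale $\epsilon$. Rescaling around $z_\epsilon$ as in the first step produces a non-trivial positive limit solution, carrying a strictly positive energy contribution $c'>0$ via Remark \ref{remark Fatou ground state}. Since the supports of the rescaled profiles centred at $z_\epsilon$ and at each $x_{i,\epsilon}$ are eventually disjoint, adding the local energy contributions yields
\begin{equation}
\epsilon^{-3} J_\epsilon(u_\epsilon,v_\epsilon) \geq c' + \sum_{i=1}^k c_i + o_\epsilon(1),
\end{equation}
contradicting the asymptotic $c_\epsilon = \epsilon^3(\sum_{i=1}^k c_i + o_\epsilon(1))$ in Theorem \ref{u epsilon v epsilon is critic point and ground state estimate}. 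The hardest step is the first one: the common-maximum statement in (i) is what forces one to go beyond the pure variational framework and invoke symmetry properties of the cooperative limit system, rather than merely the abstract convergence that the rescaling argument supplies.
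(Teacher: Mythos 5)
Your overall strategy coincides with the paper's: obtain the maximum points from Theorem \ref{limsup liminf theorem}, rescale, pass to a limit profile, use radial symmetry of the limit to merge the two maxima, and use the maximum principle for the exponential decay. Your treatment of (ii), (iii) and (iv) is consistent with what the paper does (the paper itself is sketchy there, invoking the maximum principle for (iii) and the energy asymptotics for the remaining claims).

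There is, however, a genuine gap in your argument for (i). You rescale around a single maximum point of $u_\epsilon$ and then invoke $C^{2}_{\mathrm{loc}}$ convergence to a radially symmetric limit $(U,W)$ to conclude that the maxima of $\tilde u_\epsilon$ and $\tilde v_\epsilon$ lie within $o(1)$ of each other. But local convergence only controls compact sets, while the rescaled set $\Lambda_i^\epsilon$ has diameter of order $\epsilon^{-1}$: a priori the maximum point $y_{i,\epsilon}$ of $v_\epsilon$ could sit at a distance $\gg \epsilon$ from the maximum point $x_{i,\epsilon}$ of $u_\epsilon$, in which case the profile $W$ obtained by rescaling around $x_{i,\epsilon}$ simply does not see the maximum of $v_\epsilon$, and no symmetry statement about $(U,W)$ can relate the two points. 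The paper closes exactly this hole first: setting $\xi_{i,\epsilon}=(x_{i,\epsilon}-y_{i,\epsilon})/\epsilon$, it shows that $|\xi_{i,\epsilon}|$ is bounded by an energy-splitting contradiction --- if $|\xi_{i,\epsilon}|\to\infty$, the two separated bumps would force $\liminf_{\epsilon\to 0}\epsilon^{-3}J_\epsilon^i(\bar u_\epsilon,\bar v_\epsilon)\geq \tfrac{3}{2}c_i$, incompatible with the upper bound $c_i$ coming from Theorem \ref{u epsilon v epsilon is critic point and ground state estimate} --- and only then applies the radial-symmetry argument for the limit (as in \cite{ramos2005spike}) to conclude $\xi_{i,\epsilon}=0$ for small $\epsilon$. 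Note that you deploy precisely this kind of energy-splitting argument for (iv); you need it already for (i). A second, smaller point: passing from ``maxima within $o(\epsilon)$ of each other'' to an exactly common maximum point requires the strict radial decrease (non-degeneracy) of the limit profile, which you mention but should make explicit, since ``close'' is not ``equal'' and statement (i) asserts a single common local maximum point $x_{i,\epsilon}$ for both components.
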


\begin{proof}

It follows from the previous Theorem that for every $i=1, \dots, k$, which we now fix, there exist $x_{i, \epsilon}, y_{i, \epsilon} \in \Lambda_i$ such that 

\begin{equation}
    u_\epsilon (x_{i, \epsilon})= \max_{\Lambda_i} u_\epsilon \text{ and } v_\epsilon (y_{i, \epsilon})=\max_{\Lambda_i} v_\epsilon.
\end{equation}
We also know that there exists $b>0$ such that

\begin{equation}
    u_\epsilon (x_{i, \epsilon}), v_\epsilon (y_{i, \epsilon}) \geq b >0 \text{ and } \lim V(x_{i, \epsilon})= \lim V(y_{i, \epsilon})= \inf_{\Lambda_i} V \text{ as }\epsilon \to 0.
\end{equation}
We can show that if we define $\xi_{i, \epsilon} \coloneqq (x_{i, \epsilon} - y_{i, \epsilon})/\epsilon$, it holds that $\xi_{i, \epsilon}=0$ for sufficiently small $\epsilon$. Indeed, let

\begin{align}
&\bar{u}_\epsilon \coloneqq u_\epsilon (\epsilon x + x_{i,\epsilon}); \quad \bar{v}_\epsilon \coloneqq v_\epsilon (\epsilon x + x_{i, \epsilon});\\
&\tilde{u}_\epsilon \coloneqq u_\epsilon (\epsilon x + y_{i,\epsilon}); \quad \tilde{v}_\epsilon \coloneqq  v_\epsilon (\epsilon x + y_{i, \epsilon}).
\end{align} 
If it were that $|\xi_{i, \epsilon}| \to + \infty$, then we would have

\begin{equation}
    c_i \geq \liminf_{\epsilon \to 0} \frac{1}{\epsilon^3} J_\epsilon ^i (\bar{u}_\epsilon, \bar{v}_\epsilon) \geq \liminf_{\epsilon \to 0} \frac{1}{\epsilon ^3} (J_\epsilon  (\bar{u}_\epsilon, \bar{v}_\epsilon)|_{B_R} +J_\epsilon (\tilde{u}_\epsilon, \tilde{v}_\epsilon)|_{B_R}) \geq \frac{3}{2} c_i
\end{equation}
for a sufficiently large $R>0$ and a sufficiently small $\epsilon$. Then $|\xi_{i, \epsilon}|$ is bounded and we can prove $\xi_{i, \epsilon} \to 0$ using an argument similar to that used in \cite{ramos2005spike}, that is we consider the limit functions $(\bar{u}, \bar{v})$ for $(\bar{u}_\epsilon, \bar{v}_\epsilon)$ and $(\tilde{u}_\epsilon, \tilde{v}_\epsilon)$ and exploiting the radiality of $(\bar{u}, \bar{v})$ we can see that $x_{i, \epsilon}= y_{i, \epsilon}$ for sufficiently small $\epsilon$. Properties related do the exponential decay come from a standard application of the maximum principle.
\end{proof}

\end{section}

\section{\textbf{ACKNOWLEDGEMENTS}}

The second  author was supported in part by  INDAM, GNAMPA - Gruppo Nazionale per l'Analisi Matematica, la Probabilit\`a e le loro Applicazioni, by Institute of Mathematics and Informatics, Bulgarian Academy of Sciences, by Top Global University Project, Waseda University,  the Project PRA 2018 49 of  University of Pisa and project "Dinamica di equazioni nonlineari dispersive", "Fondazione di Sardegna", 2016.

\bibliographystyle{plain}
\bibliography{Bibliografia.bib}

\begin{thebibliography}{10}

\bibitem{ambrosio2019lectures}
Luigi Ambrosio, Alessandro Carlotto, and Annalisa Massaccesi.
\newblock {\em Lectures on Elliptic Partial Differential Equations}, volume~18.
\newblock Springer, 2019.

\bibitem{NehariMethodThesis}
Tommaso Cortopassi.
\newblock A generalised {N}ehari manifold method for a class of non linear {S}chr\"odinger systems in $\mathbb{R}^3$.
\newblock Master thesis, University of Pisa, June 2021.
\newblock Available at \url{https://etd.adm.unipi.it/t/etd-05282021-110619}.

\bibitem{evans1998partial}
Lawrence~C Evans.
\newblock Partial differential equations.
\newblock {\em Graduate studies in mathematics}, 19(2), 1998.

\bibitem{gilbarg2015elliptic}
David Gilbarg and Neil~S Trudinger.
\newblock {\em Elliptic partial differential equations of second order}, volume 224.
\newblock springer, 2015.

\bibitem{miranda1940osservazione}
Carlo Miranda.
\newblock {\em Un'osservazione su un teorema di Brouwer}.
\newblock Consiglio Nazionale delle Ricerche, 1940.

\bibitem{ramos2006concentration}
Miguel Ramos and S{\'e}rgio~HM Soares.
\newblock On the concentration of solutions of singularly perturbed {H}amiltonian systems in $\mathbb{R}^n$.
\newblock {\em PORTUGALIAE MATHEMATICA}, 63(2):157, 2006.

\bibitem{ramos2008solutions}
Miguel Ramos and Hugo Tavares.
\newblock Solutions with multiple spike patterns for an elliptic system.
\newblock {\em Calculus of Variations and Partial Differential Equations}, 31(1):1--25, 2008.

\bibitem{ramos2005spike}
Miguel Ramos and Jianfu Yang.
\newblock Spike-layered solutions for an elliptic system with {N}eumann boundary conditions.
\newblock {\em Transactions of the American Mathematical Society}, 357(8):3265--3284, 2005.

\bibitem{shen1995p}
Zhongwei Shen.
\newblock Lp estimates for {S}chr{\"o}dinger operators with certain potentials.
\newblock In {\em Annales de l'institut Fourier}, volume~45, pages 513--546, 1995.

\bibitem{struwe2000variational}
Michael Struwe and M~Struwe.
\newblock {\em Variational methods}, volume 991.
\newblock Springer, 2000.

\bibitem{willem1997minimax}
Michel Willem.
\newblock {\em Minimax theorems}, volume~24.
\newblock Springer Science \& Business Media, 1997.

\end{thebibliography}

\end{document}